\numberwithin{equation}{section}
\newtheorem{Theorem}{Theorem}[section]
\newtheorem{Corollary}[Theorem]{Corollary}
\newtheorem{Lemma}[Theorem]{Lemma}
\newtheorem{Proposition}[Theorem]{Proposition}
\newtheorem{Remark}{Remark}[section]
\begin{document}

\title[Lifespan for a system of semilinear heat equations]{
Lifespan of solutions for a weakly coupled system of semilinear heat equations}
\author[K. Fujiwara]{Kazumasa Fujiwara}

\address[K. Fujiwara]{%
Centro di Ricerca Matematica Ennio De Giorgi\\
Scuola Normale Superiore\\
Piazza dei Cavalieri, 3, 56126 Pisa\\
Italy.
}

\email{kazumasa.fujiwara@sns.it}


\author[M. Ikeda]{Masahiro Ikeda}
\address[M. Ikeda]{Department of Mathematics, Faculty of Science and Technology,
Keio University,
3-14-1 Hiyoshi, Kohoku-ku, Yokohama, 223-8522, Japan}
\email{masahiro.ikeda@keio.jp}
\address{Center for Advanced Intelligence Project, RIKEN, Japan}
\email{masahiro.ikeda@riken.jp}

\author[Y. Wakasugi]{Yuta Wakasugi}

\address[Y. Wakasugi]{Department of Engineering for Production and Environment,
Graduate School of Science and Engineering,
Ehime University,
3 Bunkyo-cho, Matsuyama, Ehime, 790-8577,
Japan}%
\email{wakasugi.yuta.vi@ehime-u.ac.jp}

\begin{abstract}
We introduce a straightforward method to analyze the blow-up of solutions to
systems of ordinary differential inequalities,
and apply it to study the blow-up of solutions to a weakly coupled system of
semilinear heat equations.
In particular,
we give upper and lower estimates of the lifespan of the solution
in the subcritical case.
\end{abstract}

\maketitle

\section{Introduction}
In this paper,
we consider a weakly coupled system of semilinear heat equations
	\begin{align}
	\begin{cases}
	\partial_t u - \Delta u = F(u),
	&\mbox{for} \quad
	t \in \lbrack 0, T), \quad x \in \mathbb R^n,\\
	u(0,x) = u_{0}(x),
	&\mbox{for} \quad
	x \in \mathbb R^n
	\end{cases}
	\label{eq:1.1}
	\end{align}
with $n \ge 1$.
Here $u = (u_1, u_2, \ldots, u_k) : \lbrack 0, T) \times \mathbb R^n \to \mathbb R^k$,
is an $\mathbb R^k$-valued unknown function with $k\ge 1$.
The nonlinearity $F$ is defined as
	\[
	F(u) = (F_1(u), F_2(u), \ldots, F_k(u)), \quad
	F_j(u) = |u_{j+1}|^{p_j}, \quad
	p_j \geq 1 \quad (j=1,2,\ldots,k),
	\]
where $u_{k+1}$ is interpreted as $u_1$.
Also,
$u_0 = (u_{0,1}, \ldots, u_{0,k}) \in L^1(\mathbb R^n)\cap L^{\infty}(\mathbb R^n)$
is a given initial data.

The system \eqref{eq:1.1} with $k=2$ was introduced by
Escobedo and Herrero \cite{EsHe91}
as a simple model of a reaction-diffusion system,
which can describe a heat propagation in a two-component combustible mixture.
Later on, many authors studied the system \eqref{eq:1.1} and
determined the so-called critical exponent.
Here the critical exponent is defined in the following way.
Let $\mathcal{P}$ be a $k \times k$ matrix defined by
	\begin{align*}
	\mathcal{P} = \begin{pmatrix}
	0		& p_1		& 0		& \cdots	& 0\\
	0		& 0		& p_2		& \cdots	& 0\\
	\vdots& \vdots	& \ddots	& \ddots	& \vdots\\
	0		& 0		& \cdots	& 0		& p_{k-1}\\
	p_k	& 0		& \cdots	& 0		& 0
	\end{pmatrix}
	\end{align*}
and let $I$ be the $k$-th identity matrix.
If $(p_1,\ldots,p_k) \neq (1,\ldots,1)$, then it is easy to see that
$| \mathcal{P} - I | \neq 0$,
which enables us to define
	\[
	\alpha
	= {}^t\!(\alpha_1,\ldots,\alpha_k) = (\mathcal{P}-I)^{-1} \cdot {}^t\! (1,\ldots,1).
	\]
Here for each $j$, $\alpha_j$ is explicitly given by
	\begin{align}
	\alpha_j = \frac{\sum_{h=0}^{k-2}\prod_{m=0}^h p_{j+m}+1}{\prod_{m=1}^k p_m -1},
	\label{eq:1.2}
	\end{align}
where for $j > k$, $p_j$ is interpreted as $p_{j-k}$.
Let $\alpha_{\mathrm{max}} = \max_{1\le j \le k} \alpha_j$.
It is known that
if $\alpha_{\mathrm{max}} < n/2$,
then the system \eqref{eq:1.1} admits a unique global solution for small initial data;
if $\alpha_{\mathrm{max}} \geq n/2$,
then solutions to \eqref{eq:1.1} blow up in a finite time for any
nontrivial nonnegative initial data.
In this sense, the relation
$\alpha_{\mathrm{max}} = n/2$
is called the critical exponent
(see \cite{EsHe91, EsLe95, MoHu98, Re00, Um03, AoTsYa07, IsKaSi16}).
This is a natural extension of
the pioneering works by
\cite{Fu66, Hay73, KoSiTa77, We81}
for the single semilinear heat equation
because if $k=1$,
then $\alpha_1 = 1/(p-1)$ and the critical case is given when $p = 1+2/n$,
which is the well-known Fujita exponent.

The reason why the exponent
$\alpha$ is related to the critical exponent is explained by the scaling argument
(the following argument is also found in \cite{NiWa15}).
If $u$ is a solution to \eqref{eq:1.1},
then so is $u^{(\lambda)}$
for any $\lambda>0$,
where for $1 \leq j \leq k$,
	\begin{align}
	u_j^{(\lambda)}
	:= \lambda^{2\alpha_j} u_j(\lambda^2 t, \lambda x).
	\label{eq:1.3}
	\end{align}
Moreover, the $L^1$-norm of the initial data
$\| u_j^{(\lambda)}(0,\cdot) \|_{L^1} = \lambda^{2\alpha_j -n} \| u_{0,j} \|_{L^1}$
is invariant under the critical condition $\alpha_j = n/2$.

Here, we remark that
the invariant scaling transformation \eqref{eq:1.3} implies
that when $k=1$, for any $\lambda > 0$,
	\begin{align}
	T_m(u_0) \| u_0 \|_{L^1}^{1/(1/(p-1)-n/2)}
	&= T_m(u_0) \lambda^{-2} \| u^{(\lambda)}(0,\cdot) \|_{L^1}^{1/(1/(p-1)-n/2)}
	\nonumber\\
	&= T_m(u^{(\lambda)}(0,\cdot)) \| u^{(\lambda)}(0,\cdot) \|_{L^1}^{1/(1/(p-1)-n/2)}
	\label{eq:1.4}
	\end{align}
where
	\begin{align}%
	T_m(u_0) &:= \sup \{ T>0 ; \,
	\mbox{With the initial data $u_0$, there exists a unique solution}\,
	\nonumber\\
		&\qquad\qquad\qquad \qquad u \in C([0,T);
		L^1(\mathbb R^n) \cap L^{\infty}(\mathbb R^n))
	\, \mbox{to \eqref{eq:1.1}} \}.
	\label{eq:1.5}
	\end{align}%
Since $\lambda > 0$ is arbitrary, from \eqref{eq:1.4},
when $k=1$,
it is expected that for some $0 < c < C$,
	\begin{align}
	c \| u_0 \|_{L^1}^{-1/(1/(p-1)-n/2)}
	\leq T_m(u_0)
	\leq C \| u_0 \|_{L^1}^{-1/(1/(p-1)-n/2)}.
	\label{eq:1.6}
	\end{align}
Indeed, the first estimate of \eqref{eq:1.6} holds for any $n \ge 1$
and non-negative $u_0 \in L^1(\mathbb{R}^n) \cap L^{\infty}(\mathbb{R}^n)$ when $p<p_F$,
provided that $\| u_0 \|_{L^1}$ is replaced by
$\| u_0 \|_{L^1\cap L^{\infty}}$.
For details, see Proposition \ref{Proposition:1.3} below.

The aim of this paper is
to prove the blow-up of solutions to \eqref{eq:1.1}
by a straightforward approach of ordinary differential equation(ODE).
Specifically, a blow-up of solutions to \eqref{eq:1.1} follows from the study of
the following ODE system:
	\begin{align}
	\frac{d}{dt} f(t) = \widetilde F(t,f(t)),
	\qquad
	\mbox{for} \quad 0 < t < T.
	\label{eq:1.7}
	\end{align}
A general approach to study ODE systems is to find some function
$G: \mathbb R^k \to \mathbb R$
such that a single ODE of $G$ follows from \eqref{eq:1.7}.
In particular, we may find a function $G$ satisfying that
	\begin{align}
	\frac{d}{dt} G(f(t)) \geq C G(f(t))^\gamma,
	\label{eq:1.8}
	\end{align}
with some positive constants $C$ and $\gamma$,
which may imply that $G$ blows up at a finite time.
For example, Mochizuki \cite{Mo98} showed that
solutions to \eqref{eq:1.1} blow up
when $k=2$, by studying \eqref{eq:1.7}
with $\widetilde F = F$ and $G(f) = f_1$.
Indeed, Mochizuki obtained an ODE for $f_1$ by connecting two ODEs of \eqref{eq:1.7}
with the following identity:
	\begin{align}
	\frac{1}{p_2+1} \frac{d}{dt} (f_1^{p_2+1})
	= f_1^{p_2} f_2^{p_1}
	= \frac{1}{p_1+1} \frac{d}{dt} (f_2^{p_1+1}).
	\label{eq:1.9}
	\end{align}
By combining \eqref{eq:1.7} and \eqref{eq:1.9},
one formally has
	\begin{align}
	\frac{d}{dt} f_1(t)
	= f_2(t)^{p_1}
	\sim f_1(t)^{\frac{p_1(p_2+1)}{p_1+1}}
	= f_1(t)^{\frac{1}{\alpha_1}+1},
	\label{eq:1.10}
	\end{align}
from which a sharp lifespan estimate for \eqref{eq:1.1} is obtained.
Here we say that a lifespan estimate is sharp if
$p_j = p < p_F$ and $u_{0,j} = v_0 \in L^1(\mathbb R^n) \cap L^\infty(\mathbb R^n)$ for any $j$,
then $T_m(v_0)$ satisfies \eqref{eq:1.6}.
However, for $k \geq 3$, there is no identity like \eqref{eq:1.9}
which unites equations of \eqref{eq:1.7}.
In the case where $k \geq 3$,
Fila and Quittner \cite{FiQu99} studied \eqref{eq:1.1} by an ordinary differential inequality \eqref{eq:1.8}
with $G(f) = \sum_{j=1}^k f_j$
and discussed the blow-up rate of solutions near blow-up time
(See Lemma 2.1 in \cite{FiQu99}).
However, with this choice of $G$,
it seems difficult to obtain the sharp lifespan estimate for \eqref{eq:1.7}
and consequently also for \eqref{eq:1.1} as well.
On the other hand,
Wang \cite{Wan02} obtained the blow-up rate of solutions to \eqref{eq:1.1}
on a bounded domain in $\mathbb{R}^n$ with $k \geq 3$
by \eqref{eq:1.8} with
$G(f) = \prod_{j=1}^k f_j$
(See the proof of Theorem 1 in \cite{Wan02}).
However,
it also seems difficult to get the sharp lifespan estimate
with $G(f) = \prod_{j=1}^k f_j$.

In this paper,
so as to obtain a sharp lifespan estimate,
we avoid using a function $G$
such as $G(f) = \prod_{j=1}^k f_j$ or $G(f) = \sum_{j=1}^k f_j$.
On the other hand,
we introduce a concatenation of equations of \eqref{eq:1.7}
derived by weakly coupled interaction
and show that for each $j$,
	\[
	\frac{d}{dt} f_j(t) \geq C f_j(t)^{\frac{1}{\alpha_j}+1}.
	\]
This is a natural extension of the idea of \eqref{eq:1.10}.

Before stating our main results,
we recall the local existence of the solution.
\begin{Proposition}
\label{Proposition:1.1}
For any $p_j \ge 1 (j=1,\ldots,k)$
and
$u_0 \in L^1(\mathbb R^n) \cap L^{\infty}(\mathbb R^n)$,
there exist $T>0$ and a unique solution
$u \in C([0,T); L^1(\mathbb R^n) \cap L^{\infty}(\mathbb R^n))$
to \eqref{eq:1.1},
which satisfies \eqref{eq:1.1} in the classical sense for $0 < t < T$.
\end{Proposition}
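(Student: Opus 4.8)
The plan is to reformulate \eqref{eq:1.1} as the integral (Duhamel) equation
\[
u(t) = e^{t\Delta} u_0 + \int_0^t e^{(t-s)\Delta} F(u(s))\,ds,
\]
where $e^{t\Delta}$ denotes the heat semigroup (convolution with the Gauss kernel), and to solve it by the Banach fixed point theorem in the complete metric space
\[
X_{T,M} = \bigl\{ u \in C([0,T]; L^1(\mathbb R^n)\cap L^{\infty}(\mathbb R^n)) : \|u\|_X \le M \bigr\},
\]
equipped with $\|u\|_X = \sup_{0\le t\le T}\|u(t)\|_{L^1\cap L^{\infty}}$ and with $M = 2\|u_0\|_{L^1\cap L^{\infty}}$. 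A fixed point of the map $\Phi$ sending $u$ to the right-hand side of the Duhamel equation is precisely a mild solution.

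The key inputs are two families of estimates. First, since $e^{t\Delta}$ is a contraction on both $L^1$ and $L^{\infty}$, one has $\|e^{t\Delta}f\|_{L^1\cap L^{\infty}}\le \|f\|_{L^1\cap L^{\infty}}$. Second, for the nonlinearity $F_j(u)=|u_{j+1}|^{p_j}$ with $p_j\ge1$, the $L^{\infty}$ bound is immediate, $\|F_j(u)\|_{L^{\infty}}=\|u_{j+1}\|_{L^{\infty}}^{p_j}$, while for the $L^1$ bound the interpolation $\int |u_{j+1}|^{p_j}\,dx \le \|u_{j+1}\|_{L^{\infty}}^{p_j-1}\|u_{j+1}\|_{L^1}$ (valid precisely because $p_j\ge1$) gives $\|F_j(u)\|_{L^1}\le\|u\|_{L^1\cap L^{\infty}}^{p_j}$. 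Hence $\|F(u)\|_{L^1\cap L^{\infty}}\le C\sum_{j}\|u\|_{L^1\cap L^{\infty}}^{p_j}$. The elementary inequality $\bigl||a|^{p}-|b|^{p}\bigr|\le p(|a|+|b|)^{p-1}|a-b|$, combined with the same interpolation trick, yields the local Lipschitz bound $\|F(u)-F(v)\|_{L^1\cap L^{\infty}}\le C(M)\|u-v\|_X$ on $X_{T,M}$.

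Integrating these estimates in time gives $\|\Phi(u)\|_X\le \|u_0\|_{L^1\cap L^{\infty}} + T\,C(M)$ and $\|\Phi(u)-\Phi(v)\|_X\le T\,C(M)\|u-v\|_X$, so choosing $T>0$ small (depending on $\|u_0\|_{L^1\cap L^{\infty}}$ and the exponents $p_j$) makes $\Phi$ a self-map and a contraction of $X_{T,M}$. The Banach fixed point theorem then produces the unique solution $u\in C([0,T);L^1\cap L^{\infty})$.

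Finally, to upgrade this mild solution to a classical one on $0<t<T$, I would use the parabolic smoothing of $e^{t\Delta}$: the free term $e^{t\Delta}u_0$ is smooth for $t>0$, and the continuity of $u$ together with the semigroup estimates gives Hölder continuity in time of $s\mapsto F(u(s))$, which lets one bootstrap the Duhamel term to $C^1$ in $t$ and $C^2$ in $x$, justifying differentiation under the integral and recovering \eqref{eq:1.1} pointwise. I expect the main obstacle to lie in this last step — the careful passage from the mild to the classical formulation — and, secondarily, in handling the $L^1$ nonlinearity estimate uniformly for arbitrary $p_j\ge1$; the contraction argument itself is routine.
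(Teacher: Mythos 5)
Your contraction-mapping proof is correct in outline and is essentially the argument the paper relies on: the paper gives no proof of Proposition \ref{Proposition:1.1}, deferring to ``a simple modification of the proof of Theorem 2.1 in \cite{EsHe91}'', which is exactly this Duhamel fixed-point scheme, and your interpolation inequality $\| |u|^{p}\|_{L^1}\le\|u\|_{L^\infty}^{p-1}\|u\|_{L^1}$ is precisely the modification needed to work in $L^1\cap L^\infty$ (the same inequality the paper uses in Section 4). The two details you should still tidy up are routine: uniqueness in all of $C([0,T);L^1\cap L^\infty)$ rather than just in the ball $X_{T,M}$ (a Gronwall argument on the difference of two solutions), and the mild-to-classical bootstrap, where the Hölder continuity of $s\mapsto F(u(s))$ does not follow from mere continuity of $u$ but from the smoothing estimate $\|\nabla e^{t\Delta}f\|_{L^\infty}\le Ct^{-1/2}\|f\|_{L^\infty}$ applied to the Duhamel term for $t$ bounded away from $0$.
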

Proposition \ref{Proposition:1.1} may be obtained
by a simple modification of the proof of Theorem 2.1 in \cite{EsHe91}.
Next we define lifespan of solutions to \eqref{eq:1.1},
in a similar manner to \eqref{eq:1.5},
by
	\begin{align*}%
	T_m &:= \sup \{ T>0 ; \, \mbox{There exists a unique solution},
	\\
		&\qquad\qquad\qquad \qquad u \in C([0,T);
		L^1(\mathbb R^n) \cap L^{\infty}(\mathbb R^n)))
	\, \mbox{to \eqref{eq:1.1}} \}.
	\end{align*}%

As a corollary of Proposition \ref{Proposition:1.1},
we have the following blow-up alternative.
\begin{Corollary}
If $T_m < \infty$,
then we have $\lim_{t\to T_m -0} \| u(t) \|_{L^1\cap L^{\infty}} = \infty$.
\end{Corollary}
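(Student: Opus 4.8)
The plan is to prove the contrapositive of the blow-up alternative: I would show that if $\limsup_{t \to T_m - 0} \| u(t) \|_{L^1 \cap L^\infty} < \infty$, then the solution can be extended beyond $T_m$, contradicting the maximality of $T_m$ (unless $T_m = \infty$). This is the standard continuation argument, and the whole strategy rests on the uniform local existence time furnished by Proposition~\ref{Proposition:1.1}.

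First I would extract from the proof of Proposition~\ref{Proposition:1.1} the quantitative dependence of the local existence time on the initial data. The key observation is that the existence time $T$ produced there depends on $u_0$ only through its norm $\| u_0 \|_{L^1 \cap L^\infty}$; that is, there is a function $\tau: (0,\infty) \to (0,\infty)$, bounded below on bounded sets, such that for any initial data $v_0$ with $\| v_0 \|_{L^1 \cap L^\infty} \le M$ the equation admits a unique solution on $[0, \tau(M))$. This uniformity is exactly what the contraction-mapping construction underlying Proposition~\ref{Proposition:1.1} delivers, since the relevant Lipschitz and smallness estimates for the Duhamel formulation depend on the data only through its norm.

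Next, assuming toward a contradiction that $T_m < \infty$ while $\| u(t) \|_{L^1 \cap L^\infty}$ stays bounded by some constant $M$ as $t \to T_m - 0$, I would fix a time $t_0 < T_m$ close enough to $T_m$ that $T_m - t_0 < \tau(M+1)$, and invoke the local existence theorem with initial data $u(t_0)$. This produces a solution starting at $t_0$ and living on an interval of length at least $\tau(M+1)$, hence reaching strictly past $T_m$. By uniqueness this solution coincides with $u$ on their common interval of existence, so $u$ extends past $T_m$ as an $L^1 \cap L^\infty$-valued continuous solution, contradicting the definition of $T_m$ as the supremum of existence times. Consequently the assumption fails and $\| u(t) \|_{L^1 \cap L^\infty} \to \infty$.

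The main obstacle is establishing the uniformity claim in the first step rigorously, namely that the local existence time genuinely depends only on the norm of the data and not on finer features of $u_0$; everything else is a routine continuation-by-uniqueness argument. Once that uniform lower bound on the existence time is in hand, the contradiction is immediate. A secondary point to handle carefully is the upgrade from $\limsup$ to a genuine limit: a priori the argument only rules out a bounded $\limsup$, but since the same reasoning applies along any sequence $t_j \to T_m$ with bounded norms, one concludes that the norm must tend to $+\infty$ as $t \to T_m - 0$ rather than merely failing to stay bounded.
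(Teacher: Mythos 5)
Your proposal is correct and is exactly the argument the paper intends: the Corollary is stated as an immediate consequence of Proposition~\ref{Proposition:1.1}, whose contraction-mapping construction gives a local existence time depending only on $\| u_0 \|_{L^1\cap L^{\infty}}$, so the standard continuation-by-uniqueness contradiction (restarting at $t_0$ close to $T_m$, including your correct handling of the $\liminf$ versus $\limsup$ point) is precisely the implicit proof. No gap.
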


Moreover, from Proposition \ref{Proposition:1.1} and a standard a priori estimate,
we have the lower bound of the lifespan.
\begin{Proposition}\label{Proposition:1.3}
In addition to the assumptions of Proposition \ref{Proposition:1.1},
we further assume $(p_1, \ldots, p_k) \neq (1,\ldots,1)$.
Then, there exist constants $\varepsilon_0 > 0$ and $C>0$ such that
for any $u_0$ satisfying $\| u_0 \|_{L^1\cap L^{\infty}} \le \varepsilon_0$,
the lifespan satisfies
\begin{align*}%
	T_m \ge
	\begin{cases}
		\infty &(\alpha_{\mathrm{max}} < n/2),\\
		C \| u_0 \|_{L^1\cap L^{\infty}}^{-1/(\alpha_{\mathrm{max}} - n/2)}
			&(\alpha_{\mathrm{max}} > n/2).
	\end{cases}
\end{align*}%
\end{Proposition}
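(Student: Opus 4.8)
The plan is to establish the lower bound via the standard Duhamel/integral-equation formulation combined with a contraction-type a priori estimate on a weighted norm. First I would rewrite the system \eqref{eq:1.1} in its mild form using the heat semigroup $e^{t\Delta}$, so that each component satisfies
\begin{align*}
u_j(t) = e^{t\Delta}u_{0,j} + \int_0^t e^{(t-s)\Delta} |u_{j+1}(s)|^{p_j}\,ds.
\end{align*}
The key analytic input is the $L^1$--$L^\infty$ smoothing estimate $\|e^{t\Delta}f\|_{L^\infty} \lesssim t^{-n/2}\|f\|_{L^1}$ together with the trivial bounds $\|e^{t\Delta}f\|_{L^1}\le\|f\|_{L^1}$ and $\|e^{t\Delta}f\|_{L^\infty}\le\|f\|_{L^\infty}$. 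Interpolating these gives decay of the form $\|e^{t\Delta}f\|_{L^q}\lesssim t^{-\frac{n}{2}(1-1/q)}\|f\|_{L^1}$, which I would use to control the nonlinear Duhamel term.

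Next I would introduce a weighted solution space adapted to the scaling \eqref{eq:1.3}. Concretely, I would define a norm measuring, for each component $u_j$, the quantity $\sup_{0<t<T} t^{\theta_j}\|u_j(t)\|_{L^1\cap L^\infty}$ for an exponent $\theta_j$ dictated by $\alpha_j$, and set up a fixed-point argument in the ball of radius proportional to $\|u_0\|_{L^1\cap L^\infty}$. The core estimate is to bound the Duhamel integral of $|u_{j+1}|^{p_j}$ in this weighted norm: using $\||u_{j+1}|^{p_j}\|_{L^1\cap L^\infty}\le \|u_{j+1}\|_{L^1\cap L^\infty}^{p_j}$ (valid since $p_j\ge 1$ and we control both $L^1$ and $L^\infty$), one reduces to a time integral of the form $\int_0^t (t-s)^{-\beta} s^{-\theta_{j+1}p_j}\,ds$. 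This integral converges and yields the correct power of $t$ precisely when the exponents are chosen according to the $\alpha_j$'s, and the resulting smallness condition on $\|u_0\|_{L^1\cap L^\infty}$ closes the contraction on the time interval whose length is exactly $C\|u_0\|_{L^1\cap L^\infty}^{-1/(\alpha_{\max}-n/2)}$ in the supercritical-for-blowup regime $\alpha_{\max}>n/2$, while in the regime $\alpha_{\max}<n/2$ the integrals converge uniformly in $T$ and one obtains global existence.

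The main obstacle I anticipate is the \emph{coupling} between components: because the nonlinearity in the $j$-th equation involves $u_{j+1}$, the weighted estimates for the different components are interlocked in a cyclic fashion through the permutation matrix $\mathcal{P}$. The correct choice of weights $\theta_j$ must be consistent around the full cycle $j=1,\dots,k$ (with $u_{k+1}=u_1$), and verifying this consistency amounts to checking that the linear system defining the $\theta_j$ is solvable with the solution given precisely by the $\alpha_j$ of \eqref{eq:1.2}; this is exactly the role played by the invertibility of $\mathcal{P}-I$ assumed via $(p_1,\dots,p_k)\ne(1,\dots,1)$. Since $\alpha_{\max}=\max_j\alpha_j$ governs the most restrictive component, the lifespan bound is dictated by that worst index, and care is needed so that the weaker components do not destroy the closure of the argument. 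Once the weights are correctly identified, the time-integral estimates and the contraction are routine, so I would present the weight-selection and the cyclic consistency check as the heart of the proof, deferring the semigroup estimates and the fixed-point mechanics to standard references such as the proof of Theorem 2.1 in \cite{EsHe91}.
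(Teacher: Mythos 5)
Your overall strategy (mild formulation, heat-semigroup smoothing, a weighted norm whose exponents are determined by the cyclic system through $(\mathcal{P}-I)$) belongs to the same family of argument as the paper's, but the specific norm you propose, $\sup_{0<t<T} t^{\theta_j}\|u_j(t)\|_{L^1\cap L^\infty}$ with a \emph{single} weight on the intersection norm, has a genuine flaw: the $L^1$ component cannot decay. For nonnegative data the heat semigroup conserves mass, $\|e^{t\Delta}u_{0,j}\|_{L^1}=\|u_{0,j}\|_{L^1}$, and the nonlinear solution has nondecreasing $L^1$ norm; hence for $\theta_j>0$ even the free part of the Duhamel formula has infinite weighted norm on $(0,\infty)$, so the global-existence half of the proposition ($\alpha_{\mathrm{max}}<n/2$) cannot be run in your space, while $\theta_j=0$ gives no decay at all and the relevant time integrals diverge as $t\to\infty$.

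In the finite-lifespan case the defect is quantitative but it loses exactly the $n/2$: since you estimate $\||u_{j+1}|^{p_j}\|_{L^1\cap L^\infty}\le\|u_{j+1}\|^{p_j}_{L^1\cap L^\infty}$, the only weight choice consistent around the cycle is $\theta_j+1-p_j\theta_{j+1}=0$, i.e.\ $\theta=\alpha$; then the free part contributes $T^{\alpha_j}\|u_0\|_{L^1\cap L^\infty}$ and closing the estimate forces $T\lesssim\|u_0\|_{L^1\cap L^{\infty}}^{-1/\alpha_{\mathrm{max}}}$, strictly shorter than the claimed $\|u_0\|_{L^1\cap L^{\infty}}^{-1/(\alpha_{\mathrm{max}}-n/2)}$. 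The $-n/2$ improvement comes precisely from letting the two norms carry \emph{different} weights, so that the smoothing $\|e^{t\Delta}f\|_{L^\infty}\lesssim t^{-n/2}\|f\|_{L^1}$ is genuinely used in the interpolation bound $\|u_{j+1}\|_{L^{p_j}}^{p_j}\le\|u_{j+1}\|_{L^\infty}^{p_j-1}\|u_{j+1}\|_{L^1}$. This is what the paper does: it works with $M(t)=\sup_{\tau}\sum_j \{(1+\tau)^{l_j}\|u_j(\tau)\|_{L^\infty}+(1+\tau)^{l_j-n/2}\|u_j(\tau)\|_{L^1}\}$, where the $L^1$ weight is shifted down by $n/2$ (so it stays nonpositive, consistent with nondecaying mass), and --- a further point your sketch gets wrong --- the exponents $l_j$ are \emph{not} the $\alpha_j$ themselves: one takes $l=(1+\varepsilon)\alpha$ when $\alpha_{\mathrm{max}}<n/2$, and $l=\alpha-(\alpha_{\mathrm{max}}-n/2)\mathbf{1}$ when $\alpha_{\mathrm{max}}>n/2$; the latter shift is exactly what makes the growing factor $(1+t)^{(\alpha_{\mathrm{max}}-n/2)(p_j-1)}$ appear in the a priori estimate and yield the stated lifespan bound via a continuity argument rather than a contraction.
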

We give a proof of Proposition \ref{Proposition:1.3} in Section 4.

\begin{Remark}
In the critical case where $\alpha_{\mathrm{max}} = n/2$,
we can also have the estimate
$T_m \ge \exp \left( C \| u_0 \|_{L^1 \cap L^{\infty}}^{-(\min_j p_j -1)} \right)$
in the same way.
However, it seems not optimal and
we do not pursue here the critical case.
\end{Remark}

In order to state the main blow-up result of this paper,
we introduce some notation.
Let $\lambda >0$ and nonnegative $\phi$ satisfy
$\| \psi \|_{L^1} = 1$ and
	\[
	\frac \lambda 2 \psi(x) =
	\begin{cases}
	- \Delta \psi(x) > 0,
	&\mathrm{if} \quad |x| \leq 1,\\
	0,
	&\mathrm{if} \quad |x| \geq 1.
	\end{cases}
	\]
Namely,
$\lambda/2$
is a positive eigenvalue of Dirichlet Laplacian on the unit disc,
and
$\psi$ is a null-extension of normalized positive eigenfunction.
We put $\phi_R(x)$ as $\psi(x/R)^2$ with $R>0$.
Then
	\[
	\begin{cases}
	R^{-2} \lambda \phi_R(x) \geq
	- \Delta \phi_R(x),
	&\mathrm{if} \quad |x| \leq R,\\
	\phi_R(x) = \nabla \phi_R(x) = 0,
	&\mathrm{if} \quad |x| \geq R.
	\end{cases}
	\]
At last, for solutions to \eqref{eq:1.1},
we define
	\[
	U_{j,R}(t) = \int_{\mathbb R^n} u_{j}(t,x) \phi_R(x) dx\quad (j=1,\ldots,k).
	\]

Then we have the following estimate.

\begin{Theorem}
\label{Theorem:1.4}
Let $p_j \geq 1$ for any $ 1 \leq j \leq k$
but let $p_j > 1$ for some $j$.
Let $u_{0} \in L^1(\mathbb R^n) \cap L^\infty(\mathbb R^n)$ satisfy that
$u_{0,j}$ is nonnegative for any $j$.
Let $u \in C([0,T_m);L^1(\mathbb R^n) \cap L^\infty(\mathbb R^n))$
be the corresponding solution of \eqref{eq:1.1}.
If there exists $j_0 \in \{1,\ldots,k\}$ such that
$u_{0,j_0} \not \equiv 0$ and
	\begin{align}
	\alpha_{j_0}
	&> \frac{n}{2},
	\label{eq:1.11}
	\end{align}
then $u$ cannot exist globally and
there exists $R_0 > 0$ such that
	\begin{align}
	T_m
	\le C_0 U_{j_0,R_0}(0)^{-1/(\alpha_{j_0} - n/2)}.
	\label{eq:1.12}
	\end{align}
with some constant $C_0$.
\end{Theorem}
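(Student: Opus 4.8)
The plan is to reduce \eqref{eq:1.1} to a cyclic system of ordinary differential inequalities for the weighted masses $U_{j,R}$ and then to run the concatenation announced in the introduction. First I would fix $R>0$, multiply the $j$-th equation of \eqref{eq:1.1} by $\phi_R$ and integrate over $\mathbb R^n$. Integrating by parts twice is legitimate because $\phi_R$ and $\nabla\phi_R$ vanish for $|x|\ge R$, and since $-\Delta\phi_R\le \lambda R^{-2}\phi_R$ on $\{|x|\le R\}$ we obtain
\[
\frac{d}{dt}U_{j,R}(t)\ge -\lambda R^{-2}U_{j,R}(t)+\int_{\mathbb R^n}|u_{j+1}(t,x)|^{p_j}\phi_R(x)\,dx .
\]
The comparison principle gives $u_j\ge 0$ for all $j$ (the data are nonnegative and the nonlinearities are nonnegative), so $|u_{j+1}|^{p_j}=u_{j+1}^{p_j}$, and applying Jensen's inequality with the probability measure $\phi_R\,dx/\|\phi_R\|_{L^1}$ (permissible since $p_j\ge 1$), together with $\|\phi_R\|_{L^1}=R^n\|\psi\|_{L^2}^2$, yields
\[
\frac{d}{dt}U_{j,R}(t)\ge -\lambda R^{-2}U_{j,R}(t)+c\,R^{-n(p_j-1)}U_{j+1,R}(t)^{p_j},
\]
with indices taken cyclically and $c>0$ independent of $R$.

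Next I would remove the linear term. Multiplying by the integrating factor $e^{\lambda R^{-2}t}$ and restricting to the window $0\le t\le R^2$ (on which $e^{\lambda R^{-2}t(1-p_j)}$ is bounded below by a positive constant), the quantities $V_{j}(t):=e^{\lambda R^{-2}t}U_{j,R}(t)$ are nondecreasing, comparable to $U_{j,R}(t)$, satisfy $V_{j_0}(0)=U_{j_0,R}(0)>0$, and obey the clean cyclic system
\[
V_j'(t)\ge a_j\,V_{j+1}(t)^{p_j},\qquad a_j\simeq R^{-n(p_j-1)},\qquad 0\le t\le R^2 .
\]
It therefore suffices to show that $V_{j_0}$ blows up inside this window and to estimate its blow-up time.

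The heart of the argument is the concatenation, i.e.\ producing from the coupled system a single closed inequality for $V_{j_0}$. The algebraic input is the defining relation of $\alpha$ behind \eqref{eq:1.2}, namely $p_j\alpha_{j+1}=\alpha_j+1$ for every $j$ (read off from $(\mathcal P-I)\alpha={}^t(1,\dots,1)$). Using the monotonicity of the $V_j$ and integrating the inequalities successively around the cycle, I would establish the self-similar cross-component bounds $V_{j+1}(t)\ge \kappa\,V_j(t)^{\alpha_{j+1}/\alpha_j}$; feeding the case $j=j_0$ into the $j_0$-th inequality and using $p_{j_0}\alpha_{j_0+1}/\alpha_{j_0}=(\alpha_{j_0}+1)/\alpha_{j_0}=1+1/\alpha_{j_0}$ gives
\[
V_{j_0}'(t)\ge C(R)\,V_{j_0}(t)^{1+1/\alpha_{j_0}} .
\]
Tracking the constants around the loop (with $a_j\simeq R^{-n(p_j-1)}$ and the exponent balance $p_j\beta_{j+1}-\beta_j=n(p_j-1)$, solved by $\beta_j\equiv n$) shows $C(R)\gtrsim R^{-n/\alpha_{j_0}}$. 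I expect this concatenation to be the main obstacle: one must first propagate strict positivity of all $V_j$ around the cycle starting from $V_{j_0}(0)>0$, and then control every constant through the loop so that the power of $R$ in $C(R)$ is exactly $-n/\alpha_{j_0}$. This is precisely the step at which the choices $G=\sum_j f_j$ or $G=\prod_j f_j$ would lose the sharp exponent.

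Finally I would integrate the scalar inequality. Since $V_{j_0}(0)=U_{j_0,R}(0)$, ODE comparison with $w'=C(R)w^{1+1/\alpha_{j_0}}$ forces blow-up at a time at most
\[
T^{*}(R)=\frac{\alpha_{j_0}}{C(R)}\,U_{j_0,R}(0)^{-1/\alpha_{j_0}}\lesssim R^{\,n/\alpha_{j_0}}\,U_{j_0,R}(0)^{-1/\alpha_{j_0}} .
\]
Because $V_{j_0}$ is comparable to $U_{j_0,R}$ on the window, this entails $T_m\le T^{*}(R)$ as soon as $T^{*}(R)\le R^2$. The hypothesis \eqref{eq:1.11}, $\alpha_{j_0}>n/2$, makes $n/\alpha_{j_0}-2<0$, so $T^{*}(R)\le R^2$ holds for all large $R$; choosing $R=R_0$ so that the two quantities balance, $T^{*}(R_0)\simeq R_0^2$, and solving $R_0^{\,n/\alpha_{j_0}-2}\simeq U_{j_0,R_0}(0)^{1/\alpha_{j_0}}$ for $R_0^2$ gives $R_0^2\simeq U_{j_0,R_0}(0)^{-1/(\alpha_{j_0}-n/2)}$. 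Hence $T_m\le T^{*}(R_0)\le C_0\,U_{j_0,R_0}(0)^{-1/(\alpha_{j_0}-n/2)}$, which is \eqref{eq:1.12}.
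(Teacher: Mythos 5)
Your overall architecture is the same as the paper's: test against $\phi_R$, absorb the linear damping by an exponential factor, reduce to a cyclic ODE system with coefficients $\simeq R^{-n(p_j-1)}$, and choose $R_0$ by balancing the ODE blow-up time against the admissible time scale, which reproduces the paper's choice \eqref{eq:3.3} and the bound \eqref{eq:1.12}. (Your window trick --- working on $0\le t\le R^2$, where the factors $e^{\lambda R^{-2}(1-p_j)t}$ are bounded below --- is a legitimate alternative to the paper's device of component-dependent factors $\Lambda_j=p_j\Lambda_{j+1}$, which gives a system valid for all $t$ at the price of a decaying coefficient $e^{-\widetilde\lambda t}$ in one equation.) The genuine gap is in the step you yourself flag as the main obstacle: the concatenation. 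The theorem assumes only $u_{0,j_0}\not\equiv 0$, so every component other than $V_{j_0}$ may vanish at $t=0$. Then the cross-component bound $V_{j_0+1}(t)\ge\kappa\,V_{j_0}(t)^{\alpha_{j_0+1}/\alpha_{j_0}}$ is false at $t=0$ (left side zero, right side positive), and so is the closed inequality you propose to integrate: at $t=0$,
\[
V_{j_0}'(0)=a_{j_0}\,V_{j_0+1}(0)^{p_{j_0}}=0
\;<\;C(R)\,V_{j_0}(0)^{1+1/\alpha_{j_0}}.
\]
No pointwise differential inequality of this form for $V_{j_0}$ can hold from time zero; positivity must first propagate around the loop, and both the delay and the sizes the components reach during it must be quantified in terms of $U_{j_0,R}(0)$ and $R$ without losing powers of $R$. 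That quantification is the actual content of the paper's Section 2, and it is not routine.

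The paper resolves this with a device your sketch lacks, and any repair needs an equivalent one. First, it closes the loop on the component $j_0-1$ (the one directly fed by $u_{j_0}$), not on $j_0$ itself. Second, via the Kamke comparison principle (Lemma \ref{Lemma:2.2}) it reduces to the comparison system with zero data for $j\ne j_0$, and iterates an integration-by-parts lemma (Lemma \ref{Lemma:2.3}) around the cycle; this yields, instead of clean power-law cross bounds, inequalities of the form $A_2 g_1^{P_2}\le e^{L_2 t}g_2\,(g_1')^{Q_2}-\widetilde C_1^{Q_2}g_2(0)^{Q_1}$, carrying an explicit negative boundary term. Third --- the key trick --- it shifts the unknown, $\overline g_1=g_1+A_2^{-1/P_2}\widetilde C_1^{Q_2/P_2}g_2(0)^{Q_1/P_2}$: the shifted variable has strictly positive initial value (a power of $U_{j_0,R}(0)$), absorbs the boundary term, and satisfies a closed Bernoulli-type inequality from $t=0$, whose integration under the largeness condition \eqref{eq:2.9} (guaranteed exactly by the choice of $R_0$) produces the blow-up time in Proposition \ref{Proposition:2.1}. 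Your proposal stops precisely where this work begins, and the form of the concatenation you state is not true as written.
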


\begin{Remark}
{\rm (i)}
$R_0$ is determined by
	\begin{align}
	U_{j_0,R_0}(0)
	= C_1 R_0^{-2 \alpha_{j_0} + n}
	\label{eq:1.13}
	\end{align}
with a certain constant $C_1$.
\eqref{eq:1.11} implies existence of $R_0$ satisfying \eqref{eq:1.13}
for any nonnegative $u_{0,j_0} \in L^1(\mathbb R^n) \backslash \{0\}$,
since $f : R \mapsto U_{j_0,R}(0)$ is an increasing function for $R \geq 0$
	with $\lim_{R \to -0} f(R) = 0$.
\newline
{\rm (ii)}
Theorem \ref{Theorem:1.4} seems to be sharp from the viewpoint of the scaling
(see \eqref{eq:1.6}).
\newline
{\rm (iii)}
The proof of Theorem \ref{Theorem:1.4} implies that
	\[
	U_{j,R_0}(t) \geq C_{j} (T_0-t)^{-\alpha_j}
	\]
for any $1 \leq j \leq k$
when $t \in (0,T_0)$ close to $T_0$,
where $C_{j}$ is a positive constant and
$T_0$ is the RHS of \eqref{eq:1.12}.
However, Theorem \ref{Theorem:1.4} does not give the exact blow-up rate
because $T_0$ is nothing but an upper bound of blow-up time
(see \cite{Ba77}).
\end{Remark}

In Section 2,
we prepare blow-up results for a system of ODEs.
Then, in Section 3,
we show Theorem \ref{Theorem:1.4}
by combining a test function method developed by
\cite{BaPi85, FuOz16, FuOz17, FuIkWa17, IkeFuWapre, MiPo01, Zh99}
with the ODE argument discussed in Section 2.
Section 4 is devoted to the proof of Proposition \ref{Proposition:1.3}.

\section{ODE argument}
In this section, for $k \geq 2$,
$p_j \ge 1 \ (j=1,\ldots,k)$ with $(p_1,\ldots, p_k) \neq (1,\ldots,1)$,
and $\widetilde{\lambda} \ge 0$,
we consider the following ODE system:
	\begin{align}
	\begin{cases}
	\displaystyle
	\frac{d}{dt} f_j(t) \geq \widetilde C_j f_{j+1}(t)^{p_j}, \quad
	& \mathrm{for} \quad 1 \leq j \leq k-1, \quad  t \in \lbrack 0, T),\\[8pt]
	\displaystyle
	\frac{d}{dt} f_k(t) \geq \widetilde C_k e^{-\widetilde \lambda t} f_{1}(t)^{p_k}, \quad
	& \mathrm{for} \quad t \in \lbrack 0, T).
	\end{cases}
	\label{eq:2.1}
	\end{align}
where $\widetilde C_j > 0$ for any $j$.

In order to state the blow-up statement for \eqref{eq:2.1},
we introduce the following notation.
For $1 \leq j < k$, let
	\[
	P_{k-j} = \sum_{h=0}^{j} \prod_{m = 0}^h p_{k-j+m} + 1,
	\qquad
	Q_{k-j} = \sum_{h=0}^{j-1} \prod_{m = 0}^h p_{k-j+m} + 1,
	\]
and let
	\[
	P_{k} = p_k + 1,
	\qquad
	Q_k = 1.
	\]
Then
$(P_{k-j})_{j=1}^{k-1}$ and $(Q_{k-j})_{j=1}^{k-1}$ satisfy that for any $1 \leq j < k$,
	\begin{align}
	&P_{k-j} = p_{k-j} P_{k-j+1} + 1,
	\label{eq:2.2}\\
	&Q_{k-j} = p_{k-j} Q_{k-j+1} + 1,
	\label{eq:2.3}
	\end{align}
and
	\begin{align}
	&P_{k-j} - Q_{k-j} = \prod_{m = k-j}^k p_{\ell}.
	\label{eq:2.4}
	\end{align}
By \eqref{eq:1.2}, \eqref{eq:2.4}, and the definition of $Q_1$ and $P_2$,
	\begin{align}
	&\alpha_1 = \frac{Q_1}{P_1-Q_1-1},
	\qquad
	\alpha_2 = \frac{P_2}{P_1-Q_1-1}.
	\label{eq:2.5}
	\end{align}
For $1 \leq j < k$, we put
	\begin{align*}
	A_{k-j} &= P_{k-j}^{-1} \widetilde C_{k-j}
	\prod_{h=0}^{j-1} (P_{k-h}^{-1} \widetilde C_{k-h})^{\prod_{\ell=k-j}^{k-h-1}p_\ell},\\
	L_{k-j} &= \widetilde \lambda \prod_{m=k-j}^{k-1} p_m.
	\end{align*}
and let
	\[
	A_k = P_k^{-1} \widetilde C_k,
	\qquad
	L_{k} = \widetilde \lambda.
	\]
Then for $ 1 \leq j \leq k-1$,
	\begin{align}
	A_{k-j} &= P_{k-j}^{-1} \widetilde C_{k-j} A_{k-j+1}^{p_{k-j}},
	\label{eq:2.6}\\
	L_{k-j} &= p_{k-j} L_{k-j+1}.
	\label{eq:2.7}
	\end{align}
Moreover, let
	\begin{align}
	\label{eq:2.8}
	\widetilde C
	= L_1^{-1} (P_{1}-Q_1 -1) 2^{-p_1(P_2-1)/Q_1} P_1^{1/Q_1} A_{1}^{1/Q_1}.
	\end{align}
Now we are in the position to state our blow-up statement for \eqref{eq:2.1}.

\begin{Proposition}
\label{Proposition:2.1}
If $f_j(0) \geq 0$ for $j \neq 2$ and
	\begin{align}
	f_2 (0)
	> \widetilde C^{-\alpha_2} A_{2}^{\alpha_2/(\alpha_1 P_{2})}
	\widetilde C_1^{-\alpha_2 Q_2 /(\alpha_1 P_2)},
	\label{eq:2.9}
	\end{align}
then solutions $f_j$ to \eqref{eq:2.1} satisfy that
	\begin{align}
	f_1(t)
	&\geq \widetilde C^{-1/\alpha_1}
	( e^{- L_1 t / Q_1} - e^{- L_1 \widetilde T_0 / Q_1})^{-\alpha_1}
	- A_{2}^{-1/P_{2}} \widetilde C_1^{Q_2/P_2} f_2(0)^{Q_1/P_2},
	\label{eq:2.10}
	\end{align}
where
	\begin{align*}
	\widetilde T_0 &= - L_1^{-1} Q_1
	\log( 1 - \widetilde C^{-1} A_{2}^{1/(\alpha_1 P_{2})}
	\widetilde C_1^{-Q_2 /(\alpha_1 P_2)} f_2(0)^{- 1/\alpha_2 } ).
	\nonumber
	\end{align*}
\end{Proposition}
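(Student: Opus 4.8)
The plan is to collapse the cyclic chain \eqref{eq:2.1} into a single scalar differential inequality for $f_1$ and then integrate it by separation of variables. As a preliminary remark, every right-hand side in \eqref{eq:2.1} is nonnegative, so each $f_j$ is nondecreasing; in particular $f_j(t)\ge f_j(0)\ge 0$ for $j\neq 2$ and $f_2(t)\ge f_2(0)>0$. This monotonicity is what lets me substitute lower bounds into the nonlinearities without reversing any inequality. The key preparatory move is to turn the intermediate couplings into equalities: I would set $g_k(t)=f_k(0)+\widetilde C_k\int_0^t e^{-\widetilde\lambda s}f_1(s)^{p_k}\,ds$ and, descending, $g_j(t)=f_j(0)+\widetilde C_j\int_0^t g_{j+1}(s)^{p_j}\,ds$ for $2\le j\le k-1$. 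Since $x\mapsto x^{p}$ is increasing and $f_{j+1}\ge g_{j+1}$ at each stage, one gets $g_j\le f_j$ for all $j$, so that \eqref{eq:2.1} is reduced to the hybrid system consisting of the equalities $g_k'=\widetilde C_k e^{-\widetilde\lambda t}f_1^{p_k}$ and $g_j'=\widetilde C_j g_{j+1}^{p_j}$ $(2\le j\le k-1)$, together with the single surviving inequality $f_1'\ge\widetilde C_1 f_2^{p_1}\ge\widetilde C_1 g_2^{p_1}$.

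Next comes the concatenation. Multiplying each equality of the $g$-chain by the appropriate power of $g_j$ produces perfect derivatives, and the recursions \eqref{eq:2.2}--\eqref{eq:2.7} are exactly the bookkeeping that makes these telescope: composing them from $g_k$ up to $g_2$ expresses a power of $g_2$ through a time integral of $f_1^{p_k}$ weighted by $e^{-L_1 t}$, with the multiplicative constant assembling into the $A_{k-j}$ of \eqref{eq:2.6} and the exponential rate accumulating, through \eqref{eq:2.7}, to $L_1=\widetilde\lambda\prod_{m=1}^{k-1}p_m$. Feeding this into $f_1'\ge\widetilde C_1 g_2^{p_1}$ and using \eqref{eq:1.2} and \eqref{eq:2.5} to identify the exponents yields a closed inequality of the shape
\[ \frac{d}{dt}\bigl(f_1(t)+b\bigr)\ \ge\ c\,e^{-L_1 t/Q_1}\,\bigl(f_1(t)+b\bigr)^{1/\alpha_1+1}, \]
where $b=A_2^{-1/P_2}\widetilde C_1^{Q_2/P_2}f_2(0)^{Q_1/P_2}$ is the shift that absorbs the additive constant $f_2(0)^{Q_1}$ produced by the lower endpoint $g_2(0)=f_2(0)$, and $c$ is the explicit constant recorded in \eqref{eq:2.8}.

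It then remains to integrate. Writing $y=f_1+b$, the inequality above rearranges to $-\alpha_1\frac{d}{dt}y^{-1/\alpha_1}\ge c\,e^{-L_1 t/Q_1}$; integrating in $t$ and invoking $y(0)\ge b$ gives $y(t)^{-1/\alpha_1}\le C_*\bigl(e^{-L_1 t/Q_1}-e^{-L_1\widetilde T_0/Q_1}\bigr)$ for a suitable constant $C_*$, with $\widetilde T_0$ the time at which the right-hand side vanishes; a short computation identifies this $\widetilde T_0$ with the quantity displayed in the statement. Inverting the last inequality yields \eqref{eq:2.10}, and the hypothesis \eqref{eq:2.9} is precisely the condition that the argument of the logarithm defining $\widetilde T_0$ be positive, i.e.\ that $\widetilde T_0<\infty$ and genuine blow-up occurs.

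The main obstacle is that \eqref{eq:2.1} controls the derivatives only from below, so the naive Mochizuki-type conserved quantity $f_1^{P_2}-\kappa f_2^{Q_1}$ cannot be shown monotone: lower-bounding its derivative would require an upper bound on some $f_j'$, which is unavailable. The device that circumvents this is exactly the reduction of the first paragraph---once the intermediate couplings are equalities, the perfect-derivative relations can be chained with all inequalities pointing the same way. The two remaining delicate points, which I expect to be the technical heart, are the correct propagation of the factor $e^{-\widetilde\lambda t}$, which is raised to the product of the intermediate exponents as it passes through the chain and therefore emerges with rate $L_1$, and the careful treatment of the additive constant $f_2(0)^{Q_1}$ sitting inside the nonlinearity, which is what forces the shift $b$ and pins down the precise threshold \eqref{eq:2.9}.
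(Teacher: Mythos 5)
Your overall skeleton (reduce the inequality system to an exact chain, concatenate it into a single separable inequality for $f_1$, then integrate) is the same as the paper's, and your integral-equation construction of the $g_j$'s is a legitimate substitute for the paper's use of Kamke's comparison lemma. The gap is in the concatenation step, which you treat as mere bookkeeping; it is in fact the heart of the matter, and your version of it fails for two related reasons. First, the mechanism you invoke does not exist: for $k\ge 3$ there is no family of ``perfect derivatives'' that telescopes around the chain. Multiplying $g_j'=\widetilde C_j g_{j+1}^{p_j}$ by $g_{j+1}'$ gives a perfect derivative on one side only,
\begin{align*}
g_j'\,g_{j+1}' \;=\; \frac{\widetilde C_j}{p_j+1}\,\frac{d}{dt}\bigl(g_{j+1}^{p_j+1}\bigr),
\end{align*}
while the same product rewritten through the next equation, $g_j'\,g_{j+1}'=\widetilde C_{j+1}\,g_j'\,g_{j+2}^{p_{j+1}}$, is not a perfect derivative; the loop of identities closes only when $k=2$. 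This is exactly the obstruction the paper points out when it remarks that no identity like \eqref{eq:1.9} exists for $k\ge 3$. What replaces the identity in the paper is Lemma \ref{Lemma:2.3}: an integration by parts in which the term $\int_0^t g(\tau)\,\bigl(e^{\widetilde\lambda\tau}f'(\tau)^{q+1}\bigr)'\,d\tau$ is discarded, and the discarded term has a favorable sign only because the base function $f$ of the chain satisfies both $f'\ge 0$ and $f''\ge 0$.

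Second, and this is where your particular reduction breaks: in your hybrid system the base of the chain is $f_1$ itself, since you keep the inequality $f_1'\ge\widetilde C_1 g_2^{p_1}$ and drive $g_k$ by $f_1^{p_k}$. A function satisfying only a differential inequality need not be $C^2$, and even when it is, nothing forces $f_1''\ge 0$ (the inequality $f_1'\ge\widetilde C_1 f_2^{p_1}$ puts no constraint on the oscillation of $f_1'$). Consequently every integration-by-parts step along your chain is unjustified, and no closed inequality of the form $\frac{d}{dt}(f_1+b)\ge c\,e^{-L_1t/Q_1}(f_1+b)^{1/\alpha_1+1}$ can be extracted this way. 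The paper avoids this precisely by making \emph{all} equations exact first (Lemma \ref{Lemma:2.2}, plus the limiting argument $g_2(0)\to f_2(0)$), so that the base function obeys $g_1''=p_1\widetilde C_1\widetilde C_2\,g_2^{p_1-1}g_3^{p_2}\ge 0$. Your construction can be repaired by one change: also set $g_1(t)=\widetilde C_1\int_0^t g_2(s)^{p_1}\,ds\le f_1(t)$, so that the single surviving inequality lands in the $g_k$-equation, $e^{\widetilde\lambda t}g_k'=\widetilde C_k f_1^{p_k}\ge\widetilde C_k g_1^{p_k}$, which is exactly the (one-sided) form of hypothesis that Lemma \ref{Lemma:2.3} requires, while $g_1$ is now convex and the paper's chain runs verbatim. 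With that fix your reduction would in fact streamline the paper's proof (no comparison lemma, no limiting argument, since one may take $g_2(0)=f_2(0)$); as written, however, the concatenation step contains a genuine gap.
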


\begin{Remark}
Under the condition \eqref{eq:2.9},
we have
$\widetilde T_0 > 0$.
\end{Remark}

\begin{Remark}
\label{Remark:2.2}
Proposition \ref{Proposition:2.1} implies that
if $t$ is close to $\widetilde T_0$,
the minorant of $f_1(t)$ takes the form of $(\widetilde T_0-t)^{-\alpha_1}$.
Then if $f_k$ has a sense when $t$ is close to $\widetilde T_0$,
we have
	\[
	\frac{d}{dt} f_k(t) \geq C (\widetilde T_0-t)^{-p_k \alpha_1},
	\]
and therefore $f_k$ may satisfy
	\[
	f_k(t) \geq C (\widetilde T_0-t)^{- \alpha_k}
	\]
for $t$ close to $\widetilde T_0$.
Indeed,
	\begin{align*}
	p_k \alpha_1 - 1
	&= p_k \frac{\sum_{h=0}^{k-2} \prod_{\ell = 0}^h p_{k+1+\ell} + 1}%
	{\prod_{\ell = 1}^k p_{\ell} - 1} -1\\
	&= \frac{\sum_{h=0}^{k-3} \prod_{\ell = 0}^{h+1} p_{k+\ell} + p_k + 1}%
	{\prod_{\ell = 1}^k p_{\ell} - 1}\\
	&= \frac{\sum_{h=0}^{k-2} \prod_{\ell = 0}^{h} p_{k+\ell} + 1}%
	{\prod_{\ell = 1}^k p_{\ell} - 1}
	= \alpha_k,
	\end{align*}
where we put $p_{k+j} = p_j$ for $1 \leq j < k$.
Similarly, we have
	\[
	f_j(t) \geq C (\widetilde T_0-t)^{- \alpha_j}
	\]
for $t$ close to $\widetilde T_0$ as long as $f_j$ has a sense.
\end{Remark}

In order to prove Proposition \ref{Proposition:2.1},
at first,
we recall the comparison principle for ODE systems.

\begin{Lemma}[\!\!\cite{Ka32}]
\label{Lemma:2.2}
Let $\mathcal F_j : \lbrack 0, T) \times \mathbb R \to \mathbb R$
satisfy that if $x > y$,
	\[
	\mathcal F_j(t,x) > \mathcal F_j(t,y)
	\]
holds for any $1 \leq j \leq k$ and $ t \in \lbrack 0,T)$.
Let $(f_j)_{j=1}^k, (g_j)_{j=1}^k \subset C^1([0,T) ; \mathbb{R})$ satisfy that
for any $1 \leq j \leq k$,
	\[
	\frac{d}{dt} f_j(t) \geq \mathcal F_j(t, f_{j+1}(t)),
	\qquad
	\frac{d}{dt} g_j(t) \leq \mathcal F_j(t, g_{j+1}(t)),
	\qquad
	\mbox{for}
	\quad t \in \lbrack 0, T)
	\]
and
	\[
	f_j(0) \geq g_j(0),
	\]
where $f_{k+1} = f_1$ and $g_{k+1} = g_1$.
If $f_l(0) > g_l(0)$ for some $1 \leq l \leq k$,
then $f_j(t) > g_j(t)$
holds 
for any $t \in (0,T)$ and $1 \leq j \leq k$.
\end{Lemma}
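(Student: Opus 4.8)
The plan is to reduce the system to the single sign functions $w_j := f_j - g_j$ and to show that, once one coordinate is strictly positive at $t=0$, strict positivity of \emph{every} coordinate is forced for all $t \in (0,T)$. Each $w_j$ is $C^1$, we have $w_j(0) \geq 0$ for every $j$ with $w_l(0) > 0$, and subtracting the two one-sided differential inequalities yields the fundamental estimate
\[
	\frac{d}{dt} w_j(t) \geq \mathcal F_j(t, f_{j+1}(t)) - \mathcal F_j(t, g_{j+1}(t)),
	\qquad t \in [0,T),
\]
with indices read cyclically ($w_{k+1} = w_1$). The only structural input will be strict monotonicity: whenever $w_{j+1}(s) > 0$, i.e.\ $f_{j+1}(s) > g_{j+1}(s)$, the right-hand side above is strictly positive, so $w_j$ is strictly increasing at such instants. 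This is the Kamke/Müller-type mechanism underlying \cite{Ka32}.

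First I would establish strict positivity of all $w_j$ on a short initial interval $(0,\delta]$. Since $w_l(0) > 0$, continuity gives $w_l > 0$ on some $[0,\delta]$. Because the coupling is cyclic and $w_j$ is controlled by $w_{j+1}$, I would then propagate this positivity \emph{backwards} through the indices $l, l-1, l-2, \ldots$ (mod $k$): assuming $w_{j+1} > 0$ on $(0,\delta]$, the fundamental estimate forces $w_j' > 0$ there, and since $w_j(0) \geq 0$, integration from $0$ gives $w_j > 0$ on $(0,\delta]$ without shrinking the interval. After $k-1$ such steps every index is covered, so $w_j > 0$ on $(0,\delta]$ for all $j$.

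Second I would run a first-exit-time argument to upgrade this to the whole interval. Set
\[
	t_* = \sup\{\tau \in (0,T) : w_j(t) > 0 \text{ for all } j \text{ and all } t \in (0,\tau)\},
\]
which is $\geq \delta > 0$ by the previous step, and suppose for contradiction that $t_* < T$. By continuity $w_j(t_*) \geq 0$ for all $j$, and maximality of $t_*$ forces $w_m(t_*) = 0$ for at least one index $m$ (were all of them strictly positive, continuity would extend positivity past $t_*$). For this $m$ and any $t \in (0,t_*)$, I would integrate the fundamental estimate over $[t,t_*]$: on $(t,t_*)$ we have $w_{m+1} > 0$, so the integrand is strictly positive by strict monotonicity, whence $w_m(t_*) - w_m(t) > 0$; since $w_m(t_*) = 0$ this gives $w_m(t) < 0$, contradicting $w_m(t) > 0$. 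Hence $t_* = T$, which is exactly the claim.

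The delicate point is the cyclic coupling together with the possibility that several coordinates vanish simultaneously at the exit time: one cannot argue index by index from the sign of $w_m'(t_*)$ alone, since $w_{m+1}(t_*)$ may also vanish and make the instantaneous comparison degenerate. The integral form of the estimate is what circumvents this, because it only uses strict positivity of $w_{m+1}$ on the open interval $(t,t_*)$ — which is guaranteed strictly before the exit time — rather than at the endpoint. I would also emphasize that only strict monotonicity of $\mathcal F_j$, and no Lipschitz bound, is used, which is precisely why the strict integral argument, rather than a Gronwall-type estimate, is the appropriate tool.
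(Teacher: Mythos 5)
Your proof is correct, and its first half coincides with the paper's: both propagate strict positivity of $w_j = f_j - g_j$ backwards through the cyclic indices, starting from the one strictly positive initial coordinate, onto a short interval. The second half takes a genuinely different route. The paper supposes some coordinate touches, introduces the \emph{per-index} first crossing times $\tau_j = \inf\{0<t<T : f_j(t)=g_j(t)\}$, uses $\frac{d}{dt}(f_{j_0}-g_{j_0})(\tau_{j_0}) \le 0$ together with strict monotonicity of $\mathcal F_{j_0}$ to deduce $f_{j_0+1}(\tau_{j_0}) \le g_{j_0+1}(\tau_{j_0})$, and then descends around the cycle, $\tau_{j_0} > \tau_{j_0+1} > \cdots$, reaching a contradiction after one full loop. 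You instead take a single global exit time $t_*$ and apply one strict-increase argument to a coordinate $m$ vanishing at $t_*$, using $w_{m+1} > 0$ on the open interval $(t,t_*)$. Your route buys two things: a single contradiction step instead of a cyclic descent of $k$ crossing times, and — as you correctly flag — robustness when several coordinates vanish simultaneously. That degenerate case is exactly where the paper's write-up is delicate: the claimed strict inequality $\tau_{j_0+1} < \tau_{j_0}$ is immediate only if $f_{j_0+1}(\tau_{j_0}) < g_{j_0+1}(\tau_{j_0})$; in the equality case, ruling out $\tau_{j_0+1} = \tau_{j_0}$ requires precisely your mechanism, namely that $w_{j_0}' > 0$ on the open interval before the crossing forces $w_{j_0}(\tau_{j_0}) > 0$. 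One small point of hygiene: since $\mathcal F_j$ is only assumed monotone in its second argument (no continuity in $t$ or $x$), you should not literally integrate the lower bound $\mathcal F_m(\tau, f_{m+1}(\tau)) - \mathcal F_m(\tau, g_{m+1}(\tau))$, whose integrability is not given; instead conclude the pointwise inequality $w_m'(\tau) > 0$ on $(t,t_*)$ and integrate the continuous function $w_m'$ itself — which is what your argument does in substance, so this is a phrasing fix, not a gap.
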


For completeness, we prove Lemma \ref{Lemma:2.2}.

\begin{proof}
Without loss of generality, we assume $f_k(0) > g_k(0)$.
This implies
$f_k(t) > g_k(t)$
for $t \in \lbrack 0,\tau_0)$ with some $\tau_0 > 0$.
Therefore, the inequality
$\mathcal{F}_{k-1}(t, f_k(t)) > \mathcal{F}_{k-1}(t, g_k(t))$
holds
for $t \in (0, \tau_0)$
and this leads to
	\[
	\frac{d}{dt} ( f_{k-1}(t) - g_{k-1}(t) )
	\geq \mathcal F_{k-1}(t, f_{k}(t)) - \mathcal F_{k-1}(t, g_{k}(t)) > 0
	\]
for $t \in (0,\tau_0)$.
Hence, the inequality above and $f_{k-1}(0) \ge g_{k-1}(0)$ imply that
$f_{k-1} (t) > g_{k-1}(t)$ holds for $t \in (0,\tau_0)$.
Repeating this argument, we see that
$f_{j}(t) > g_{j}(t)$ holds for any $j$ and $t \in (0,\tau_0)$.
We suppose that for some $j_0 \in \{ 1,\ldots, k \}$ and some $t \in (0,T)$,
$f_{j_0}(t) = g_{j_0}(t)$ holds.
Then, we define
	\[
	\tau_{j_0} = \inf\{ 0 < t < T ;  f_{j_0}(t) = g_{j_0}(t) \}.
	\]
We note that $\tau_{j_0} > \tau_0$.
By the continuity of $f_{j_0}$ and $g_{j_0}$,
	\[
	\frac d{dt} f_{j_0}(\tau_{j_0}) - \frac d{dt} g_{j_0}(\tau_{j_0})
	\leq 0.
	\]
Since
	\[
	\frac{d}{dt} (f_{j_0}(t) - g_{j_0}(t))
	\ge \mathcal{F}_{j_0}(t,f_{j_0+1}(t)) - \mathcal{F}_{j_0}(t,g_{j_0+1}(t)),
	\]
we have $f_{j_0+1}(\tau_{j_0}) \leq g_{j_0+1}(\tau_{j_0})$.
Now, we can also define
	\[
	\tau_{j_0 + 1} = \inf\{ 0 < t < T ;  f_{j_0+1} (t) = g_{j_0+1} (t) \}
	\]
and we obtain
$\tau_{j_0+1} < \tau_{j_0}$.
Repeating this procedure, we define
$\tau_j = \inf \{ 0<t<T ; f_j(t) = g_j(t)\}$
satisfying that
$\tau_{j_0} > \tau_{j_0+1} > \cdots > \tau_k > \tau_1 > \cdots > \tau_{j_0-1}$
(when $j_0 = 1$, $\tau_1 > \cdots > \tau_k$).
However, the same argument implies $\tau_{j_0-1} > \tau_{j_0}$
(when $j_0=1$, $\tau_k > \tau_1$),
which is a contradiction and we have the assertion.
\end{proof}

For the proof of Proposition \ref{Proposition:2.1},
the next lemma plays a critical role.
For simplicity, hereinafter, we denote $\frac{d}{dt} f$ as $f'$.

\begin{Lemma}
\label{Lemma:2.3}
Let $T>0$ and $C, \widetilde \lambda, p,q \geq 0$.
Let $f \in C^2(\lbrack0, T); \lbrack 0, \infty) )$
and $g \in C^1(\lbrack0, T); \lbrack 0, \infty) )$ satisfy that
	\[
	\begin{cases}
	\displaystyle
	C f(t)^p
	\leq e^{\widetilde \lambda t} g'(t)
	f'(t)^q,
	&\mbox{for} \quad t \in \lbrack 0 ,T),\\
	\displaystyle
	f'(t),\ f''(t) \geq 0,
	&\mbox{for} \quad t \in \lbrack 0 ,T).
	\end{cases}
	\]
Then
	\[
	\frac{C}{p+1} ( f(t)^{p+1} - f(0)^{p+1} )
	\leq e^{\widetilde \lambda t} g(t) f'(t)^{q+1} - g(0) f'(0)^{q+1}.
	\]
\end{Lemma}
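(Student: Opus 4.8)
The plan is to reduce the stated integral inequality to a pointwise comparison of two derivatives, followed by a single integration. Introduce the auxiliary functions
\[
\Phi(t) := \frac{C}{p+1} f(t)^{p+1}, \qquad H(t) := e^{\widetilde\lambda t} g(t) f'(t)^{q+1},
\]
so that the assertion is exactly $\Phi(t) - \Phi(0) \le H(t) - H(0)$. Since $f \in C^2$ and $g \in C^1$, both $\Phi$ and $H$ are $C^1$ on $[0,T)$, and consequently it suffices to establish the pointwise bound $\Phi'(s) \le H'(s)$ for every $s \in [0,T)$ and then integrate over $[0,t]$ using the fundamental theorem of calculus.

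To get the pointwise bound I would first compute $\Phi'(s) = C f(s)^p f'(s)$. Multiplying the first hypothesis $C f(s)^p \le e^{\widetilde\lambda s} g'(s) f'(s)^q$ by $f'(s)$, which preserves the inequality precisely because $f'(s) \ge 0$, yields $\Phi'(s) \le e^{\widetilde\lambda s} g'(s) f'(s)^{q+1}$. Next, differentiating $H$ by the product and chain rules gives
\[
H'(s) = e^{\widetilde\lambda s} g'(s) f'(s)^{q+1} + \widetilde\lambda\, e^{\widetilde\lambda s} g(s) f'(s)^{q+1} + (q+1)\, e^{\widetilde\lambda s} g(s) f'(s)^{q} f''(s).
\]
The first term on the right is exactly the majorant just obtained for $\Phi'(s)$, while the remaining two terms are nonnegative because $\widetilde\lambda \ge 0$, $g \ge 0$, $f' \ge 0$, $f'' \ge 0$, and $q \ge 0$. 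Chaining these observations gives $\Phi'(s) \le e^{\widetilde\lambda s} g'(s) f'(s)^{q+1} \le H'(s)$, and integrating this from $0$ to $t$ produces $\Phi(t) - \Phi(0) \le H(t) - H(0)$, which is the claim.

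I do not expect a genuine obstacle here, as the substance of the argument is careful bookkeeping of signs rather than any hard estimate. The one point that will require a word of care is the differentiability of $H$ at points where $f'(s) = 0$, equivalently the validity of the chain rule $\frac{d}{ds} f'(s)^{q+1} = (q+1) f'(s)^{q} f''(s)$ there; this is harmless because $q + 1 \ge 1$, so $x \mapsto x^{q+1}$ is $C^1$ on $[0,\infty)$ and the composition with the $C^1$ map $f'$ remains $C^1$. A secondary subtlety worth flagging is that no sign is imposed on $g'$, but this is immaterial to the proof: the argument only discards the two manifestly nonnegative terms in $H'(s)$ and never needs $g' \ge 0$. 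Thus the inequality holds for the stated ranges of the parameters.
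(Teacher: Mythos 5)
Your proof is correct and is essentially the paper's argument in differential form: the paper bounds $C\int_0^t f(\tau)^p f'(\tau)\,d\tau$ by $\int_0^t e^{\widetilde\lambda\tau}g'(\tau)f'(\tau)^{q+1}\,d\tau$ and then integrates by parts, discarding the nonnegative integral $\int_0^t g(\tau)\,\bigl(e^{\widetilde\lambda\tau}f'(\tau)^{q+1}\bigr)'\,d\tau$, whereas your pointwise comparison $\Phi'(s)\le H'(s)$ discards exactly the same nonnegative quantity $g(s)\,\bigl(e^{\widetilde\lambda s}f'(s)^{q+1}\bigr)'$ before integrating once. Both versions rest on the identical sign facts ($f'\ge 0$ to multiply the hypothesis through, and $\widetilde\lambda, g, f', f''\ge 0$ to drop the extra terms), so this is the same proof organized via the product rule rather than integration by parts.
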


\begin{proof}
By using integration by parts,
	\begin{align*}
	&\frac{C}{p+1} ( f(t)^{p+1} - f(0)^{p+1} )\\
	&= C \int_0^t f(\tau)^{p} f'(\tau) d\tau\\
	&\leq \int_0^t e^{\widetilde \lambda \tau} g'(\tau) f'(\tau)^{q+1} d\tau\\
	&= e^{\widetilde \lambda t} g(t) f'(t)^{q+1} - g(0) f'(0)^{q+1}
	- \int_0^t g(\tau) ( e^{\widetilde \lambda \tau} f'(\tau)^{q+1})' d\tau\\
	&\leq e^{\widetilde \lambda t} g(t) f'(t)^{q+1} - g(0) f'(0)^{q+1}.
	\end{align*}
\end{proof}

\begin{proof}[Proof of Proposition \ref{Proposition:2.1}]
By Lemma \ref{Lemma:2.2},
it is enough to show \eqref{eq:2.10} for
	\[
	\begin{cases}
	g_j'(t) = \widetilde C_j g_{j+1}(t)^{p_j}, \quad
	& \mathrm{for} \quad 1 \leq j \leq k-1, \quad  t \in \lbrack 0, T),\\
	g_k'(t) = \widetilde C_k e^{-\widetilde \lambda t} g_{1}(t)^{p_k}, \quad
	& \mathrm{for} \quad t \in \lbrack 0, T),
	\end{cases}
	\]
$g_j(0) = 0$ for $j \neq 2$ and
	\begin{align}
	\widetilde C^{-\alpha_2} A_{2}^{\alpha_2/(\alpha_1 P_{2})}
	\widetilde C_1^{-\alpha_2 Q_2 /(\alpha_1 P_2)}
	< g_2(0) < f_2(0).
	\label{eq:2.11}
	\end{align}
Again by Lemma \ref{Lemma:2.2},
we remark that
$g_j(t) \geq 0$ for any $1 \leq j \leq k$.
By Lemma \ref{Lemma:2.3} and $\widetilde C_k g_1^{p_k} = e^{\widetilde \lambda t} g_k'$,
we deduce
	\[
	\frac{\widetilde C_k}{p_k+1} g_1(t)^{p_{k}+1}
	\leq e^{\widetilde \lambda t} g_k(t) g_1'(t),
	\]
which is rewritten as
	\begin{align}
	A_k g_1(t)^{P_k}
	\le e^{L_k t} g_k(t) g_1'(t)^{Q_{k}}.
	\label{eq:2.12}
	\end{align}
Here, we have used the fact that
	\[
	g_1''(t)
	= p_1 \widetilde{C}_1 g_2(t)^{p_1-1} g_2'(t)
	= p_1 \widetilde{C}_1 \widetilde{C}_2 g_2(t)^{p_1-1} g_3(t)^{p_2}
	\geq 0.
	\]
Taking the $p_{k-1}$-th power in both sides of \eqref{eq:2.12},
multiplying the both sides by $\widetilde{C}_{k-1}$,
and by \eqref{eq:2.2},  \eqref{eq:2.3}, \eqref{eq:2.6}, \eqref{eq:2.7},
and using $g_{k-1}'(t) = \widetilde C_{k-1} g_k(t)^{p_{k-1}}$, we have
	\[
	P_{k-1} A_{k-1} g_1(t)^{P_{k-1}-1}
	\leq e^{L_{k-1} t} g_{k-1}'(t) g_1'(t)^{Q_{k-1} -1}.
	\]
Again, by Lemma \ref{Lemma:2.3},
we see that
	\[
	A_{k-1} g_1(t)^{P_{k-1}}
	\leq e^{ L_{k-1} t} g_{k-1}(t) g_1'(t)^{Q_{k-1}}.
	\]
Repeating this argument, we have
	\begin{align*}
	A_{2} g_1(t)^{P_{2}}
	&\leq e^{ L_{2} t} g_{2}(t) g_1'(t)^{Q_2}
	- g_{2}(0) g_1'(0)^{Q_2}\\
	&= e^{ L_{2} t} g_{2}(t) g_1'(t)^{Q_2} - \widetilde C_1^{Q_2} g_{2}(0)^{Q_1}.
	\end{align*}
Here, we put
	\[
	\overline g_1(t)
	= g_1(t) + A_{2}^{-1/P_{2}} \widetilde C_1^{Q_2/P_2} g_2(0)^{Q_1/P_2}.
	\]
Then, we have
	\begin{align}%
	\label{eq:2.13}
	A_2 \overline{g}_1(t)^{P_2}
	&= A_2 \Big( g_1(t)
	+ A_2^{-1/P_2} \widetilde{C}_1^{Q_2/P_2}g_2(0)^{Q_1/P_2} \Big)^{P_2}\\
	\nonumber
	&\le 2^{P_2-1} \Big( A_2 g_1(t)^{P_2} + \widetilde{C}_1^{Q_2} g_2(0)^{Q_1} \Big)\\
	\nonumber
	&\le 2^{P_2-1} e^{L_2 t} g_2(t) \overline{g}_1'(t)^{Q_2}.
	\end{align}%
Then, taking the $p_{1}$-th power in both sides of \eqref{eq:2.13},
and multiplying the both sides by $\widetilde{C}_1$,
we have
	\[
	2^{-p_1(P_2-1)} P_1 A_{1} \overline g_1(t)^{P_{1}-1}
	\leq e^{L_1 t} \overline g_1'(t)^{Q_1}.
	\]
Therefore, we obtain
	\[
	\overline g_1'(t)
	\geq
	2^{-p_1(P_2-1)/Q_1} P_1^{1/Q_1} A_{1}^{1/Q_1}
	e^{-L_1 t/Q_1} \overline g_1(t)^{(P_{1}-1)/Q_1},
	\]
which and \eqref{eq:2.5}, \eqref{eq:2.8}, and $(P_1-1)/Q_1 > 1$ imply that
	\begin{align}
	\label{eq:2.14}
	\overline g_1(t)
	&\geq \Big( \overline g_1(0)^{-(P_{1}-Q_1-1)/Q_1}
	- \widetilde C (1 - e^{-L_1 t /Q_1} ) \Big)^{-Q_1/(P_{1} - Q_1 -1)}\\
	\nonumber
	&= \Big( \overline g_1(0)^{-1/\alpha_1}
	- \widetilde C (1 - e^{-L_1 t /Q_1} ) \Big)^{-\alpha_1}\\
	\nonumber
	&= \widetilde C^{-1/\alpha_1}
	( e^{-L_1 t /Q_1}  - e^{-L_1 \widetilde T_1 /Q_1} )^{-\alpha_1},
	\end{align}
where
	\[
	\widetilde T_1
	= - L_1^{-1} Q_1 \log
	\Big( 1 - \widetilde{C}^{-1} A_2^{1/(\alpha_1 P_2)}
	\widetilde{C}_1^{-Q_2/(\alpha_1 P_2)}
	g_2(0)^{-1/\alpha_2} \Big).
	\]
Indeed, we compute
	\begin{align*}
	\overline g_1(0)^{-1/\alpha_1}
	&= A_{2}^{1/(\alpha_1 P_{2})} \widetilde C_1^{- Q_2/(\alpha_1 P_2)}
	g_2(0)^{- Q_1 /(\alpha_1 P_2)}\\
	&= A_{2}^{1/(\alpha_1 P_{2})} \widetilde C_1^{- Q_2/(\alpha_1 P_2)}
	g_2(0)^{- (P_1-Q_1-1) /P_2 }\\
	&= A_{2}^{1/(\alpha_1 P_{2})} \widetilde C_1^{- Q_2/(\alpha_1 P_2)}
	g_2(0)^{- 1/\alpha_2 }.
	\end{align*}
We also remark that \eqref{eq:2.11} implies positiveness of $\widetilde T_1$.
From \eqref{eq:2.14}, the definition of $\overline{g}_1(t)$,
and Lemma \ref{Lemma:2.2},
we have
	\[
	f_1(t)
	\geq g_1(t)
	\geq \widetilde{C}^{-1/\alpha_1}
	\Big( e^{-L_1 t/Q_1} - e^{-L_1 \widetilde{T}_1/Q_1} \Big)^{-\alpha_1}
	- A_2^{-1/P_2} \widetilde{C}_1^{Q_2/P_2} g_2(0)^{Q_1/P_2}.
	\]
Finally, by taking the limit $g_2(0) \to f_2(0)$,
the RHS of the inequality above converges to that of \eqref{eq:2.10}.
We note that
$g_2(0) < f_2(0)$ leads to $\widetilde T_1 > \widetilde T_0$,
and
$\widetilde T_1 \to \widetilde T_0$
by letting $g_2(0) \to f_2(0)$.
\end{proof}

\section{Proof of Theorem \ref{Theorem:1.4}}
In this section,
we prove Theorem \ref{Theorem:1.4}.
Here we restate Theorem \ref{Theorem:1.4}
with more details.

\begin{Proposition}
\label{Proposition:3.1}
Let $u_0 \in L^1 (\mathbb R^n) \cap L^{\infty} (\mathbb R^n)$ satisfy
$u_{0,j} \geq 0$ for any $1 \leq j \leq k$.
We assume $p_j \ge 1 \ (j=1,\ldots,k)$ and $(p_1,\ldots,p_k) \neq (1,\ldots,1)$.
Let $u \in C([0,T) ; L^1 (\mathbb R^n) \cap L^{\infty} (\mathbb R^n))$
be a classical solution of \eqref{eq:1.1}.
We further assume that
there exists $j_0 \in \{1,\ldots,k\}$ such that
$\alpha_{j_0} > \frac{n}{2}$ and $u_{0,j_0} \not \equiv 0$.
Then, for some constants $\overline C_1, \overline C_2, \overline C_3$,
we have
\begin{align}
\label{eq:3.1}
	&U_{j_0-1,R_0}(t) + \overline C_2\\
	\nonumber
	&\geq \overline C_1 e^{-2 R_0^{-2} t}
	( e^{- 2 \lambda \prod_{m \neq j_0-2} p_{m} R_0^{-2} t / \alpha_{j_0-1}}
	- e^{- 2 \lambda \prod_{m \neq j_0-2} p_{m} R_0^{-2} T_0 / \alpha_{j_0-1}}
	)^{-\alpha_{j_0-1}}
\end{align}
with $T_0$ satisfying
	\begin{align}
	0 < T_0
	\leq \overline C_3 U_{j_0,R}(0)^{- 1/(\alpha_{j_0} - n/2)}
	\label{eq:3.2}
	\end{align}
and $R_0$ defined by
	\begin{align}
	U_{j_0,R_0}
	= 2^{\alpha_{j_0}} \overline{C_1}^{2 \alpha_{j_0} - n} R_0^{-2 \alpha_{j_0} + n},
	\label{eq:3.3}
	\end{align}
where we interpret $U_{0,R} = U_{k,R}$, $\alpha_{0} = \alpha_k$,
$p_0 = p_k$, and $p_{-1}=p_{k-1}$.
\end{Proposition}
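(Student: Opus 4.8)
The plan is to reduce the partial differential system \eqref{eq:1.1} to the ordinary differential system \eqref{eq:2.1} for the spatial averages $U_{j,R}(t)$ by a test function argument, and then to invoke Proposition \ref{Proposition:2.1}. First I would record the sign information: since $u_{0,j}\ge 0$ and every nonlinearity $|u_{j+1}|^{p_j}$ is nonnegative, the Duhamel formula (or the maximum principle) gives $u_j(t,\cdot)\ge 0$, so that $|u_{j+1}|^{p_j}=u_{j+1}^{p_j}$ and $U_{j,R}(t)\ge 0$ throughout. Testing the $j$-th equation of \eqref{eq:1.1} against $\phi_R$ and integrating by parts (the boundary terms vanish because $\phi_R=\nabla\phi_R=0$ for $|x|\ge R$) yields
\[
\frac{d}{dt}U_{j,R}(t)=\int_{\mathbb R^n}u_j\,\Delta\phi_R\,dx+\int_{\mathbb R^n}u_{j+1}^{p_j}\phi_R\,dx.
\]
The eigenfunction bound $-\Delta\phi_R\le R^{-2}\lambda\,\phi_R$ with $u_j\ge 0$ controls the first term from below by $-R^{-2}\lambda\,U_{j,R}$, while Jensen's inequality for the probability measure $\phi_R\,dx/m_R$, with $m_R=\int\phi_R\,dx=R^n\|\psi\|_{L^2}^2$ and the convexity of $s\mapsto s^{p_j}$, bounds the nonlinear term below by $m_R^{1-p_j}U_{j+1,R}^{p_j}$. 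Hence
\[
\frac{d}{dt}U_{j,R}(t)\ge -R^{-2}\lambda\,U_{j,R}(t)+m_R^{1-p_j}U_{j+1,R}(t)^{p_j}\qquad(1\le j\le k).
\]

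Next I would remove the zeroth order damping terms and concentrate the resulting exponential weights into a single equation, so as to land exactly on the form \eqref{eq:2.1}. I relabel cyclically so that $j_0$ plays the role of the index $2$ of Section 2 (hence $j_0-1$ plays the role of $1$ and $j_0-2$ that of $k$), and set $f_i=e^{b_i t}U_{\pi(i),R}$, where $\pi$ is the cyclic shift and the constants $b_i\ge R^{-2}\lambda$ are fixed by the telescoping relation $b_i=p_{\pi(i)}b_{i+1}$ for $i<k$ together with $b_k=R^{-2}\lambda$. Because $b_i\ge R^{-2}\lambda$ and $U_{\pi(i),R}\ge 0$, the damping term becomes nonnegative and may be discarded; the relation $b_i=p_{\pi(i)}b_{i+1}$ makes the weight disappear from the equations $i=1,\dots,k-1$, while the cyclic closure produces a single weight $e^{-\widetilde\lambda t}$ in the last equation with $\widetilde\lambda=R^{-2}\lambda\big(\prod_{m=1}^{k}p_m-1\big)$. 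Thus the $f_i$ are $C^1$, satisfy \eqref{eq:2.1} with $\widetilde C_i=m_R^{1-p_{\pi(i)}}$ and this $\widetilde\lambda$, and all of these constants are explicit powers of $R$.

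Finally I would apply Proposition \ref{Proposition:2.1}. The hypothesis \eqref{eq:2.9} is a lower bound on $f_2(0)=U_{j_0,R}(0)$; inserting the $R$-dependence of $\widetilde C,A_2,\widetilde C_1$ (built from $\widetilde C_i\sim R^{n(1-p)}$ and $\widetilde\lambda\sim R^{-2}$) and simplifying with the formula \eqref{eq:1.2} for the $\alpha_j$, this condition collapses to $U_{j_0,R}(0)>\mathrm{const}\cdot R^{-2\alpha_{j_0}+n}$, which is precisely \eqref{eq:3.3}. Since $\alpha_{j_0}>n/2$ makes the exponent negative, the monotonicity of $R\mapsto U_{j_0,R}(0)$ noted in Remark (i) yields a unique admissible $R_0$. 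Proposition \ref{Proposition:2.1} then provides the lower bound \eqref{eq:2.10} for $f_1$, and multiplying through by $e^{-b_1 t}\le 1$, absorbing the bounded subtracted constant into $\overline C_2$, returns the asserted estimate \eqref{eq:3.1} for $U_{j_0-1,R_0}=e^{-b_1 t}f_1$. As the right-hand side of \eqref{eq:3.1} diverges as $t\uparrow\widetilde T_0$, the solution cannot continue past $\widetilde T_0$, and estimating $\widetilde T_0\sim L_1^{-1}Q_1\sim R_0^2$ (the logarithm in $\widetilde T_0$ being $O(1)$ thanks to the margin built into \eqref{eq:3.3}) together with $R_0^2\sim U_{j_0,R_0}(0)^{-1/(\alpha_{j_0}-n/2)}$ gives \eqref{eq:3.2}.

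The hard part is the bookkeeping in this last step: one must track how the Section 2 constants $\widetilde C,A_2,\widetilde C_1,L_1,Q_1$ depend on $R$ through $m_R\sim R^n$ and $\widetilde\lambda\sim R^{-2}$, and verify both that the threshold in \eqref{eq:2.9} reduces exactly to the scaling relation \eqref{eq:3.3} and that the blow-up time $\widetilde T_0$ is comparable to $R_0^2$. It is precisely this computation that forces the sharp exponent $\alpha_{j_0}-n/2$; the other delicate point is the arrangement of the exponential weights into a single equation, so that Proposition \ref{Proposition:2.1} applies verbatim.
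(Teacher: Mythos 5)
Your proposal is correct and follows essentially the same route as the paper's proof: test against $\phi_{R_0}$, use the eigenfunction bound plus Jensen/H\"older (the paper uses H\"older, which is equivalent here) to get the ODE system, remove the damping by exponential weights telescoping as $b_i = p_{\pi(i)} b_{i+1}$ (the paper's $\Lambda_j = p_j \Lambda_{j+1}$), apply Proposition \ref{Proposition:2.1}, and reduce the threshold \eqref{eq:2.9} by power counting in $R_0$ to the scaling relation \eqref{eq:3.3}, with $\widetilde T_0 \sim R_0^2$ giving \eqref{eq:3.2}. The only differences are cosmetic (Jensen instead of H\"older; discarding the surplus damping directly rather than first strengthening the coefficient), and your identification of the $R$-dependence of $\widetilde C, A_2, \widetilde C_1, L_1$ as the decisive computation matches exactly what the paper carries out in \eqref{eq:3.7}--\eqref{eq:3.12}.
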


Here, we remark that Proposition \ref{Proposition:3.1}
implies $T_m \le T_0$, that is, the assertion of Theorem \ref{Theorem:1.4}.

\begin{proof}
Without loss of generality,
we assume $j_0 = 2$.
Recall $\| \phi \|_{L^1} = 1$.
Then by the H\"older inequality, we have
	\begin{align}
	\label{eq:3.4}
	\int_{\mathbb R^n} u_{j+1}(t,x) \phi_{R_0}(x) dx
	&\leq
	\bigg( \int_{\mathbb R^n} \phi_{R_0}(x) dx \bigg)^{1/p_j'}
	\bigg( \int_{\mathbb R^n} |u_{j+1}(t,x)|^{p_j} \phi_{R_0}(x) dx \bigg)^{1/p_j}\\
	\nonumber
	&\leq
	R_0^{n/p_j'}
	\bigg( \int_{\mathbb R^n} |u_{j+1}(t,x)|^{p_j} \phi_{R_0}(x) dx \bigg)^{1/p_j}.
	\end{align}
Here, $p_j'$ is the H\"older conjugate of $p_j$, that is,
$p_j' = p_j/(p_j-1)$ if $p_j >1$ and $p_j'=\infty$ if $p_j =1$.
Hereafter, we interpret that $1/p_j' = 0$ if $p_j'=\infty$.
We deduce that
	\begin{align*}%
	U_{j,R_0}'(t)
	&= \int_{\mathbb R^n} \partial_t u_j(t,x) \phi_{R_0}(x) dx\\
	&= \int_{\mathbb R^n} \Delta u_j(t,x) \phi_{R_0}(x) dx
	+ \int_{\mathbb R^n} |u_{j+1}(t,x)|^{p_j} \phi_{R_0}(x) dx \\
	&= \int_{\mathbb R^n} u_j(t,x) \Delta \phi_{R_0}(x) dx
	+ \int_{\mathbb R^n} |u_{j+1}(t,x)|^{p_j} \phi_{R_0}(x) dx \\
	&\geq - 2 \lambda R_0^{-2} \int_{\mathbb{R}^n} u_j(t,x) \phi_{R_0}(x) dx
	+ \int_{\mathbb R^n} |u_{j+1}(t,x)|^{p_j} \phi_{R_0}(x) dx \\
	&\geq - 2 \lambda R_0^{-2} U_{j,R_0}(t)
	+ \int_{\mathbb R^n} |u_{j+1}(t,x)|^{p_j} \phi_{R_0}(x) dx.
	\end{align*}%
Combining this and \eqref{eq:3.4}, we see that
for any $j \in \{1,\ldots,k\}$, $U_{j,R_0}$ satisfies
	\[
	U_{j,R_0}' + 2 \lambda R_0^{-2} U_{j,R_0}
	\geq R_0^{-n(p_j-1)} U_{j+1,R_0}^{p_j}.
	\]
Furthermore, since $U_{j,R} \ge 0$,
we immediately obtain from the above inequality that
	\begin{align}%
	U_{j,R_0}' + \Lambda_j R_0^{-2} U_{j,R_0}
	\geq R_0^{-n(p_j-1)} U_{j+1,R_0}^{p_j},
	\label{eq:3.5}
	\end{align}%
where
	\[
	\Lambda_k = 2 \lambda,
	\quad \Lambda_{j} = p_j \Lambda_{j+1} \ (j=1,\ldots,k-1).
	\]
Put $\widetilde U_{j} = e^{\Lambda_j R_0^{-2} t} U_{j,R_0}$
for $1 \leq j \leq k$.
Then, by \eqref{eq:3.5}, for $1 \leq j \le k-1$, we have
	\begin{align*}
	\widetilde U_j'
	&= e^{\Lambda_j R_0^{-2} t}
	\left( U_{j,R_0}' + \Lambda_j R_0^{-2}U_{j,R_0} \right)\\
	&\geq R_0^{-n(p_j-1)}
	e^{\Lambda_j R_0^{-2} t} U_{j+1,R_0}^{p_j}\\
	&= R_0^{-n(p_j-1)} e^{-(p_j\Lambda_{j+1}-\Lambda_j) R_0^{-2} t}
	\widetilde U_{j+1}^{p_j} \\
	&= R_0^{-n(p_j-1)} \widetilde U_{j+1}^{p_j},
	\end{align*}
and
	\begin{align*}
	\widetilde U_k'
	&= e^{\Lambda_k R_0^{-2} t}
	\left( U_{k,R_0}' + \Lambda_k R_0^{-2}U_{k,R_0} \right)\\
	&\geq R_0^{-n(p_k-1)}
	e^{\Lambda_k R_0^{-2} t} U_{1,R_0}^{p_k}\\
	&= R_0^{-n(p_k-1)} e^{-(p_k\Lambda_{1}-\Lambda_k) R_0^{-2} t} \widetilde U_{1}^{p_k}.
	\end{align*}
Then we apply Proposition \ref{Proposition:2.1}
with $f_j = \widetilde U_j$,
$\widetilde C_j = R_0^{-n (p_j-1)}$,
$\widetilde \lambda = (p_k\Lambda_{1}-\Lambda_k) R_0^{-2}$.
Indeed, we first
remark that $\widetilde U_{2}(0) > 0$ holds
since we assume that $u_{0,2} \geq 0$
and $u_{0,2} \not \equiv 0$
(see also Remark 1.2 (i) and \eqref{eq:3.3}).
Next, we check the condition \eqref{eq:2.9}.
We note that
$p_k\Lambda_{1}-\Lambda_k = 2 \lambda ( \prod_{j=1}^kp_j -1) = 2 \lambda (P_1-Q_1 -1)$,
and hence,
	\begin{align}
	\label{eq:3.6}
	\frac{L_1}{Q_1}
	= \frac{\widetilde{\lambda}}{Q_1}
	\prod_{m =1}^{k-1} p_{m}
	= 2 \lambda R_0^{-2}  \frac{P_1-Q_1 -1}{Q_1} 
	\prod_{m =1}^{k-1} p_{m}
	= 2 \lambda R_0^{-2} \alpha_1^{-1} \prod_{\ell =1}^{k-1} p_{m}.
	\end{align}
Here, we have used \eqref{eq:2.5}.
From here, $C$ denotes general constants independent of $R_0$.
If $k = 2$, $A_2$ is computed as
	\begin{align}
	\label{eq:3.7}
	A_{2}
	= P_{2}^{-1} \widetilde C_{2}
	= C R_0^{-n(p_2-1)}
	= C R_0^{-n(P_2-Q_2-1)},
	\end{align}
otherwise,
\begin{align}
\label{eq:3.8}
	A_{2}
	&= P_{2}^{-1} \widetilde C_{2}
	\prod_{h=0}^{k-3} (P_{k-h}^{-1} \widetilde C_{k-h})^{\prod_{m=2}^{k-h-1}p_m}\\
	\nonumber
	&= C
	R_0^{-n(p_2-1 + \sum_{h=0}^{k-3} (p_{k-h} -1 )\prod_{m=0}^{k-3-h}p_{m+2})}\\
	\nonumber
	&= C
	R_0^{-n(\prod_{m=0}^{k-2}p_{m+2} - 1)}\\
\nonumber
	&= C
	R_0^{-n(P_{2}-Q_{2}-1)}.
\end{align}
Here, we have used \eqref{eq:2.4}.
We recall that by \eqref{eq:2.6} and \eqref{eq:2.8},
	\[
	\widetilde C
	=C \widetilde \lambda^{-1} \widetilde C_1^{1/Q_1} A_{2}^{p_1/Q_1}.
	\]
From this and $\widetilde{\lambda}^{-1} = CR_0^2$,
RHS of \eqref{eq:2.9} is calculated as
	\begin{align}
	\label{eq:3.9}
	\widetilde C^{-\alpha_2} A_2^{\alpha_2/(\alpha_1 P_2)}
	\widetilde C_1^{-\alpha_2 Q_2/(\alpha_1 P_2)}
	= C \Big(
	R_0^{-2} \widetilde C_1^{-1/Q_1 - Q_2/(\alpha_1 P_2)} A_2^{1/(\alpha_1 P_2) - p_1/Q_1}
	\Big)^{\alpha_2}
	\end{align}
Let us further compute the RHS.
We also directly obtain
	\begin{align*}
	-\frac{1}{Q_1} - \frac{Q_2}{\alpha_1 P_2}
	&= -\frac{1}{Q_1} - \frac{Q_2}{P_2} \bigg( \frac{P_1-1}{Q_1} - 1 \bigg)\\
	&= -\frac{1}{Q_1} - \frac{Q_2}{P_2} \bigg( \frac{p_1 P_2}{Q_1} - 1 \bigg)\\
	&= -\frac{1}{Q_1} - \frac{Q_1-1}{Q_1} + \frac{Q_2}{P_2}
	= \frac{Q_2-P_2}{P_2},\\
	\frac{1}{\alpha_1 P_2} - \frac{p_1}{Q_1}
	&= \frac{P_1-Q_1-1}{Q_1 P_2} - \frac{p_1}{Q_1}
	= -\frac{1}{P_2}.
	\end{align*}
Since $\widetilde C_1 = C R^{-n(p_1-1)}$,
	\begin{align}
	\label{eq:3.10}
	\widetilde C_1^{-1/Q_1 - Q_2/(\alpha_1 P_2)}
	&= C R_0^{-n(p_1-1)(Q_2-P_2)/P_2}
	= C R_0^{n(P_1-Q_1 - (P_2-Q_2) )/P_2}.
	\end{align}
Moreover, \eqref{eq:3.7} and \eqref{eq:3.8} imply
	\begin{align}
	\label{eq:3.11}
	A_2^{1 /(\alpha_1 P_2) - p_1/Q_1}
	&= C R_0^{n(P_2-Q_2-1)/P_2}.
	\end{align}
From \eqref{eq:3.9}--\eqref{eq:3.11},
that the LHS of \eqref{eq:3.9} is simply expressed as
	\begin{align*}
	\Big( \widetilde C^{-1} A_2^{1/(\alpha_1 P_2)} \widetilde C_1^{- Q_2/(\alpha_1 P_2)}
	\Big)^{\alpha_2}
	&= C \Big( R_0^{-2 + n(P_1-Q_1-1)/P_2} \Big)^{\alpha_2}
	= C R_0^{-2 \alpha_2 +n}.
	\end{align*}
Here, in the constant $C$ on the RHS depends only on $(p_1, \ldots, p_k)$.
Let us rewrite the identity above as
	\begin{align}%
	\label{eq:3.12}
	\Big(
	\widetilde C^{-1} A_2^{1/(\alpha_1 P_2)} \widetilde C_1^{- Q_2/(\alpha_1 P_2)}
	\Big)^{\alpha_2}
	= ( \overline{C}_1  R_0^{-1} )^{2 \alpha_2 -n}.
	\end{align}%
This and \eqref{eq:3.3} imply
	\[
	\widetilde C^{-1} A_2^{1/(\alpha_1 P_2)} \widetilde C_1^{-Q_2/(\alpha_1 P_2)}
	U_{2,R_0}(0)^{-1/\alpha_2}
	= \left( \overline{C}_1 R_0^{-1} \right)^{2-n/\alpha_2} U_{2,R_0}(0)^{-1/\alpha_2}
	= \frac 1 2.
	\]
Thus, the assumption \eqref{eq:2.9} holds, and
we can apply Proposition \ref{Proposition:2.1}.
The assertion \eqref{eq:2.10} of Proposition \ref{Proposition:2.1} with \eqref{eq:3.6}
lead to the estimate \eqref{eq:3.1} with $T_0$ satisfying
	\begin{align*}
	T_0
	&= - L_1^{-1}Q_1 \log
	\left( 1 - \widetilde C^{-1} A_2^{1/(\alpha_1 P_2)} \widetilde C_1^{-Q_2/(\alpha_1 P_2)}
	U_{2,R_0}(0)^{-1/\alpha_2} \right)\\
	&\leq C R_0^{2} \\
	&\leq C U_{2,R_0}(0)^{-1/(\alpha_2 - n/2)}.
\end{align*}
Here, the first identity follows from the assertion of Proposition \ref{Proposition:2.1},
the second inequality is due to \eqref{eq:3.6},
and the third inequality is due to \eqref{eq:3.3}.
This completes the proof.
\end{proof}

\section{Lower bound of the lifespan}
In this section, we give the proof of Proposition \ref{Proposition:1.3}.
The global existence for the case when $\alpha_{\mathrm{max}} < n/2$
may be shown by the argument of \cite{EsHe91,Um03,OgTa09}.
However, for reader's convenience, we give a proof here.
By Proposition \ref{Proposition:1.1}, we construct the local solution
$u \in C([0,T_m); L^1(\mathbb R^n) \cap L^{\infty}(\mathbb R^n))$
of the
integral equation
	\[
	u_j(t) = e^{t\Delta} u_{0,j} + \int_0^t e^{(t-\tau)\Delta} |u_{j+1}|^{p_j}\,d\tau.
	\]
Let $l_j \le n/2$ determined later and we define
	\begin{align*}%
	M(t) &:= \sup_{\tau \in \lbrack 0,t)}
	\sum_{j=1}^k \left\{ (1+\tau)^{l_j} \| u_j(\tau) \|_{L^{\infty}}
	+ (1+\tau)^{l_j-n/2} \| u_j(\tau) \|_{L^1} \right\}.
	\end{align*}%
It is well known that for $1 \leq p \leq q \leq \infty$,
	\begin{align}%
	\| e^{t\Delta} f \|_{L^q}
	\le C t^{-\frac{n}{2}\left(\frac{1}{p}-\frac{1}{q}\right)} \| f \|_{L^p}.
	\label{eq:4.1}
	\end{align}%
By using \eqref{eq:4.1},
\begin{align*}%
	&(1+t)^{l_j-n/2} \| u_{j}(t) \|_{L^{1}}\\
	&\le C (1+t)^{l_j-n/2} \| u_{0,j} \|_{L^{1}}
		+ C (1+t)^{l_j-n/2} \int_0^{t} \| u_{j+1}(\tau) \|_{L^{p_{j}}}^{p_{j}}\,d\tau \\
	&\le C (1+t)^{l_j-n/2} \| u_{0,j} \|_{L^{1}} \\
	&\quad + C (1+t)^{l_j-n/2} \int_0^t (1+\tau )^{-l_{j+1}p_j + n/2}M(\tau)^{p_j} \,d\tau \\
	&\le C (1+t)^{l_j-n/2} \| u_{0,j} \|_{L^{1}}
		+ C L_j (t) M(t)^{p_j},
\end{align*}%
where
\begin{align*}
	L_j(t) = \begin{cases}
		(1+t)^{l_j-n/2}&(-l_{j+1} p_j + n/2 < -1),\\
		(1+t)^{l_j-n/2} \log (1+t) &(-l_{j+1} p_j + n/2 = -1),\\
		(1+t)^{l_j - l_{j+1}p_j +1} &(-l_{j+1} p_j + n/2 > -1)
	\end{cases}
\end{align*}
and we have used the fact that,
by the interpolation and Young inequality
	\begin{align*}
	&\| u_{j+1}(\tau) \|_{L^{p_j}}^{p_j}\\
	&\leq \| u_{j+1}(\tau) \|_{L^\infty}^{p_{j}-1} \| u_{j+1}(\tau) \|_{L^1}\\
	&= (1+\tau)^{-p_j l_{j+1} + n/2}
	((1+\tau)^{l_{j+1}} \| u_{j+1}(\tau) \|_{L^\infty})^{p_j-1}
	((1+\tau)^{l_{j+1}-n/2} \| u_j(\tau) \|_{L^1})\\
	&\leq (1+\tau)^{-p_j l_{j+1} + n/2} M(\tau)^{p_j}.
	\end{align*}

Next, we consider the estimate for $\| u_j (t) \|_{L^{\infty}}$.
First, for $0<t \le 1$, we apply $L^{\infty}$-$L^{\infty}$ estimate to obtain
	\begin{align*}
	\| u_j(t) \|_{L^{\infty}}
	&\le C \| u_{0,j} \|_{L^{\infty}}
		+ C \int_0^t \| u_{j+1}(\tau) \|_{L^{\infty}}^{p_j} \,d\tau \\
	&\le C \| u_{0,j} \|_{L^{\infty}}
		+ C \int_0^1 (1+\tau)^{-l_{j+1} p_j} M(\tau)^{p_j} \,d\tau \\
	&\le C \| u_{0,j} \|_{L^{\infty}} + CM(t)^{p_j}.
	\end{align*}
For $t \ge 1$, we apply
$L^1$-$L^{\infty}$ estimate to obtain
\begin{align*}
	(1+t)^{l_j}\| u_{j}(t) \|_{L^{\infty}}
	&\le C(1+t)^{l_j} t^{-n/2} \| u_{0,j} \|_{L^1} \\
	&\quad + C (1+t)^{l_j}
	\int_0^{t/2} (t-\tau)^{-n/2} \| | u_{j+1}(\tau) |^{p_j} \|_{L^1} \,d\tau \\
	&\quad + C (1+t)^{l_j}
	\int_{t/2}^t \| u_{j+1}(\tau) \|_{L^\infty}^{p_j} \,d\tau \\
	&\le C(1+t)^{l_j -n/2}\| u_{0,j} \|_{L^1} \\
	&\quad + C (1+t)^{l_j -n/2} \int_0^{t/2} (1+\tau)^{-l_{j+1}p_j +n/2} M(\tau)^{p_j}\,d\tau \\
	&\quad + C (1+t)^{l_j} \int_{t/2}^t (1+\tau)^{-l_{j+1}p_j} M(\tau)^{p_j} \, d\tau \\
	&\le  C(1+t)^{l_j -n/2}\| u_{0,j} \|_{L^1}
		+ C L_j(t) M(t)^{p_j}.
\end{align*}
We remark that, the definition of $L_j(t)$ implies that for any $t >0$,
	\[
	(1+t)^{l_j} \int_{t/2}^t (1+\tau)^{-l_{j+1}p_j} d \tau
	\leq C (1+t)^{l_j - l_{j+1}p_j + 1}
	\leq C L_j(t).
	\]
Therefore, we conclude
	\begin{align}%
	\label{eq:4.2}
	M(t) &\le C_0 \| u_0 \|_{L^1 \cap L^{\infty}}
		+ C_1 \max_{1\le j \le k} (L_j(t) M(t)^{p_j})
	\end{align}%
with some constants
$C_0, C_1 > 0$.

Now, we determine $l_j \ (1\le j \le k)$ in the following way.
Let 
$\mathbf{1} = {}^t\!(1,\ldots,1)$,
$l = {}^t\!(l_1,\ldots, l_k)$,
$\alpha = {}^t\!(\alpha_1,\ldots,\alpha_k)$.

\noindent
{\bf Case 1}: When $\alpha_{\max} < n/2$,
we take $\varepsilon > 0$
so that
$(1+\varepsilon) \alpha_{\max} < n/2$.
Then, we determine $l$ by the relation
\begin{align*}
	l_j - l_{j+1}p_j + 1 = - \varepsilon,
\end{align*}
namely,
\begin{align*}
	l = (1+\varepsilon) \left( \mathcal{P} - I \right)^{-1} \mathbf{1} = (1+\varepsilon) \alpha.
\end{align*}
Then, it is obvious that
$l_j < n/2$ for $1\le j \le k$.
Therefore, by the definition of
$L_j(t)$,
in any case we have $L_j(t) \le C$
with some constant $C>0$ independent of $t \ge 0$.
Thus, from \eqref{eq:4.2}, we have the a priori estimate
\begin{align*}
	M(t) \le C_0 \| u_0 \|_{L^1 \cap L^{\infty}}
		+ C_1' \max_{1\le j \le k} M(t)^{p_j},
\end{align*}
which enables us to prove the small data global existence of the solution.
Namely, in this case $T_m = \infty$ holds for small initial data.

\noindent
{\bf Case 2}: When $\alpha_{\max} > n/2$,
we determine $l$ by the relation
	\[
	l_j - l_{j+1} p_j +1 = \left( \alpha_{\max} - n/2 \right) (p_j -1) \quad (1 \le j \le k),
	\]
that is,
	\[
	l = \alpha - \left( \alpha_{\max} - n/2 \right) \mathbf{1}.
	\]
In this case, again,
	\begin{align}
	l_j = \alpha_j - \alpha_{\max} + n/2 \le n/2.
	\label{eq:4.3}
	\end{align}
Moreover, we remark that for any $j$,
	\begin{align}
	-l_{j+1} p_j + n/2 > -1.
	\label{eq:4.4}
	\end{align}
Indeed, if there exists some $j_0$ such that
	\[
	-l_{j_0+1} p_{j_0} + n/2 \leq -1,
	\]
then
	\[
	0
	\geq l_{j_0} - n/2
	\geq l_{j_0} - p_{j_0} l_{j_0+1} + 1
	= (\alpha_{\mathrm{max}} - n/2) (p_{j_0} -1) \geq 0.
	\]
Therefore
$p_{j_0} = 1$, $l_{j_0} = n/2$, and $l_{j_0+1} \geq n/2 +1$,
which contradicts \eqref{eq:4.3}.
By \eqref{eq:4.4}, for any $1 \leq j \leq k$,
	\[
	L_j(t)
	= (1+t)^{(\alpha_{\max} -n/2)(p_j-1)}.
	\]
Therefore, by \eqref{eq:4.2} we conclude
	\[
	M(t)
	\le C_0 \| u_0 \|_{L^1 \cap L^{\infty}}
		+ C_1 \max_{1\le j \le k}
			(1+t)^{(\alpha_{\max} -n/2)(p_j-1)} M(t)^{p_j}.
	\]
We take again
$T_1$ as the smallest time such that
$M(t) = 2C_0 \| u_0 \|_{L^1 \cap L^{\infty}}$
(we note that if such a time does not exist, we have $T_m = \infty$).
Then, substituting $t=T_1$ in the above inequality, we have
	\[
	C_0 \| u_0 \|_{L^1 \cap L^{\infty}}
	\leq C_1 \max_{1\le j \le k}
		(1+T_1)^{(\alpha_{\max} -n/2)(p_j-1)}
			\left( 2 C_0 \| u_0 \|_{L^1 \cap L^{\infty}} \right)^{p_j},
	\]
which and smallness of $u_0$ imply that
	\[
	T_m > T_1 \geq C \| u_0 \|_{L^1 \cap L^{\infty}}^{-1/(\alpha_{\max} - n/2)}.
	\]
This completes the proof.

\refstepcounter{section}

\section*{Acknowledgments}
The authors would like to express their hearty thanks to Dr. K\={o}dai Fujimoto
for valuable discussions.
This work was partly supported by JSPS KAKENHI Grant Numbers
16J30008, 15K17571, and 16K17625.



\end{document}